\title{Self-dual almost-K\"ahler four manifolds}
\begin{document}
\author{Inyoung Kim}

\maketitle

 \begin{abstract}
We classify compact self-dual almost-K\"ahler four manifolds of positive type and zero type. 
In particular, using LeBrun's result, we show that any self-dual almost-K\"ahler metric on a manifold which is diffeomorphic to $\mathbb{CP}_{2}$
 is the Fubini-Study metric up to rescaling. 
In case of negative type, we classify compact self-dual almost-K\"ahler four manifolds with $J$-invariant Ricci tensor. 
\end{abstract}
\maketitle
\section{\large\textbf{Introduction}}\label{S:Intro}

On an oriented riemannian four manifold, $\Lambda^{2}$ is decomposed as $\Lambda^{+}\oplus \Lambda^{-}$, 
where $\Lambda^{+}$ is the $(+1)$-eigenspace of Hodge star operator $*$
and $\Lambda^{-}$ is $(-1)$-eigenspace of $*$. 
According to this decomposition, the curvature operator is decomposed as follows. 

\[ 
\mathfrak{R}=
\LARGE
\begin{pmatrix}

 \begin{array}{c|c}
\scriptscriptstyle{W_{+}\hspace{5pt}\scriptstyle{+}\hspace{5pt}\frac{s}{12}}I& \scriptscriptstyle{ric_{0}}\\
 \hline
 
 \hspace{10pt}
 
 \scriptscriptstyle{ric_{0}^{*}}& \scriptscriptstyle{W_{-}\hspace{5pt}\scriptstyle{+}\hspace{5pt}\frac{s}{12}}I\\
 \end{array}
 \end{pmatrix}
 \]

When $W_{-}=0$, we call $g$ is self-dual. If $ric_{0}=0$, then $(M, g)$ is Einstein. 

Let $(M, g)$ be a smooth, riemannian four manifold and let $J$ be an almost complex structure such that 
$g(X, Y)=g(JX, JY)$. Then the the fundamental 2-form is defined by $\omega(X, Y)=g(JX, Y)$. 
Then $(M, g, J)$ is called an almost-Hermitian four-manifold. 
On an almost-Hermitian four-manifold $(M, g, \omega)$, 
$\Lambda^{2}$ is decomposed as follows [5]. 
\[\Lambda^{2}=\omega\oplus[[\Lambda^{2,0}]] \oplus [\Lambda_{0}^{1,1}].\]
Here $[[\Lambda^{2,0}]]$ is the underlying real vector space of type $(2, 0)$ forms 
and $[\Lambda_{0}^{1,1}]$ is the vector space of real $(1, 1)$ forms which is orthogonal to $\omega$.
Moreover, we have $\Lambda^{+}=\omega\oplus[[\Lambda^{2,0}]]$ and $\Lambda^{-}=[\Lambda_{0}^{1,1}]$.

\vspace{40pt}
\hrule

\vspace{20pt}

$\mathbf{Keywords}$: Self-dual, Almost-K\"ahler, Scalar curvature, J-invariant Ricci tensor

$\mathbf{MSC}$: 53C21, 53C25, 57K41

Republic of Korea

Email address: kiysd5@gmail.com

According to this decomposition of $\Lambda^{2}=\omega\oplus[[\Lambda^{2,0}]] \oplus [\Lambda_{0}^{1,1}]$, 
the curvature tensor on an almost Hermitian four manifold is decomposed as follows [5].

\[ 
\mathfrak{R}=
\LARGE
\begin{pmatrix}

\begin{array}{c|c|c}
\scriptscriptstyle{a}& \scriptscriptstyle{ W_{+}^{F}}&\scriptscriptstyle{R_{F}}\\
\hline

\scriptscriptstyle{W_{+}^{F*}}&\scriptscriptstyle{W_{+}^{3}+\frac{1}{2}bI}&\scriptscriptstyle{R_{0}}\\
\hline

\scriptscriptstyle{R_{F}^{*}}&\scriptscriptstyle{R_{0}^{*}}&\scriptscriptstyle{W_{-}+\frac{1}{3}cI}\\

\end{array}

\end{pmatrix}
\]
Here $a=\frac{s^{*}}{4}, b=\frac{s-s^{*}}{4}, c=\frac{s}{4}$, where $s^{*}=2R(\omega, \omega)$. 
If $d\omega=0$ on an almost-Hermitian four manifold $(M, g, J)$, we call $(M, g, \omega)$ is an almost-K\"ahler four manifold. 
In this article, we consider a compact self-dual almost-K\"ahler four manifold $(M, g, \omega)$. 
When $\nabla\omega=0$, equivalently $J$ is integrable, $(M, g, \omega)$ is K\"ahler. 

Compact self-dual K\"ahler surfaces with constant scalar curvature are locally symmetric [8], [10]. 
Moreover, compact self-dual Hermitian surfaces were classified in [3]. 
In this article, we classify compact self-dual almost-K\"ahler four manifolds of zero type and positive type. 
Any conformal class of a compact riemannian four manifold has a conformal metric with constant scalar curvature [6], [27]. 
According to the sign of the scalar curvature, we call a conformal class positive, zero or negative type. 
In particular, we show that a self-dual almost-K\"ahler metric on a manifold which is diffeomorphic to $\mathbb{CP}_{2}$ is 
with the Fubini-Study metric on up to rescaling. 

We also consider compact self-dual almost-K\"ahler four manifolds with $J$-invariant Ricci tensor. 
There exists a general result on weakly self-dual almost-K\"ahler four manifolds with $J$-invariant Ricci tensor [2].
The Ricci form of a weakly self-dual almost-K\"ahler four manifold with $J$-invariant Ricci tensor is a twistor 2-form unless 
$(M, g)$ is Einstein [2]. On the other hand, there does not exist a twistor 2-form on a compact self-dual four manifold with negative type [25]. 
Using this, we get a classification result in case of negative type under the assumption of $J$-invariant Ricci-tensor. 
We show a few applications of this result.

\vspace{50pt}
\section{\large\textbf{Self-dual almost-K\"ahler four manifolds of positive type}}\label{S:Intro}

First, we consider compact self-dual K\"ahler surfaces. 
In [3], compact, self-dual Hermitian surfaces were classified using Miyaoka-Yau inequality [31]. 
Following this, we provide a proof in case of self-dual K\"ahler surfaces. 
We note that Derdzi$\acute{n}$ski's proof does not use this inequality.

\newtheorem{Theorem}{Theorem}
\begin{Theorem}
(Apostolov-Davidov-Muskarov Derdzi$\acute{n}$ski)
Let $(M, g, \omega)$ be a compact self-dual K\"ahler surface. 
Then $(M, g)$ is locally symmetric metric. 
\end{Theorem}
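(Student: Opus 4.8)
The plan is to turn the algebraic constraints that the K\"ahler condition and self-duality jointly impose on the curvature into a characteristic-number inequality, compare it with the Miyaoka--Yau inequality, and settle the remaining scalar-flat case via the classical description of conformally flat K\"ahler surfaces.

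First I would record the algebra. On a K\"ahler surface $W_{+}$ is algebraically determined by the scalar curvature: with respect to $\Lambda^{+}=\omega\oplus[[\Lambda^{2,0}]]$ one has $W_{+}=\tfrac{s}{12}\,\mathrm{diag}(2,-1,-1)$, so that $|W_{+}|^{2}=\tfrac{s^{2}}{24}$ and $36\,\det W_{+}=s\,|W_{+}|^{2}$, and moreover $W_{+}=\tfrac{s}{12}T$ for a parallel section $T$ of $S^{2}_{0}\Lambda^{+}$; hence $\nabla W_{+}$ and $\delta W_{+}$ are pointwise proportional to $ds$. Feeding $W_{-}=0$ and $|W_{+}|^{2}=\tfrac{s^{2}}{24}$ into the Gauss--Bonnet and Hirzebruch signature formulas gives
\[
c_{1}^{2}(M)-3c_{2}(M)=\frac{1}{16\pi^{2}}\int_{M}|\mathrm{Ric}_{0}|^{2}\,dV_{g}\ \ge\ 0,
\]
with equality precisely when $g$ is Einstein.

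The central step is to show that $s$ is constant. For this I would use a Weitzenb\"ock identity for $W_{+}$ of Derdzi\'nski's type on a compact oriented four-manifold, which for a self-dual metric reads (in a suitable normalization)
\[
\int_{M}\bigl(|\nabla W_{+}|^{2}-2|\delta W_{+}|^{2}\bigr)\,dV_{g}=\int_{M}\Bigl(\tfrac{s}{2}|W_{+}|^{2}-18\,\det W_{+}\Bigr)\,dV_{g}.
\]
By the coincidence $36\det W_{+}=s|W_{+}|^{2}$ the integrand on the right vanishes identically, while by the first paragraph the integrand on the left equals a positive multiple of $|ds|^{2}$; hence $ds\equiv 0$. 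Equivalently $\delta W=\delta W_{+}=0$, so the Ricci tensor is a Codazzi tensor. Now I would split according to the value of the constant $s$. If $s=0$, then $|W_{+}|^{2}=\tfrac{s^{2}}{24}=0$, so $W_{+}=0$; together with $W_{-}=0$ this makes $(M,g)$ conformally flat, and a conformally flat K\"ahler surface is locally a Riemannian product of two surfaces of constant, mutually opposite curvatures, hence locally symmetric. If $s\neq 0$, the sign of $\int_{M}s\,dV_{g}$ (equivalently of $c_{1}\cdot[\omega]$) fixes the Kodaira dimension: $s<0$ forces $M$ to be of general type, necessarily minimal since a blow-down would violate the displayed inequality, and $s>0$ forces $M$ to be del Pezzo; the Miyaoka--Yau inequality $c_{1}^{2}\le 3c_{2}$ (respectively the elementary bounds $c_{1}^{2}\le 9$, $\chi\ge 3$ for del Pezzos, which force $M=\mathbb{CP}_{2}$) then yields $c_{1}^{2}\le 3c_{2}$, hence equality in the displayed identity, hence $\mathrm{Ric}_{0}\equiv 0$: $g$ is K\"ahler--Einstein. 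Then $\mathrm{Ric}=\tfrac{s}{4}g$ and $W_{+}$ are parallel and $W_{-}=0$, so $\nabla R=0$; by uniqueness of K\"ahler--Einstein metrics $g$ is, up to rescaling, the Fubini--Study metric on $\mathbb{CP}_{2}$ or the symmetric metric of a compact complex-hyperbolic quotient. (Alternatively, once $s$ is known to be constant one can avoid Miyaoka--Yau entirely: combining the Codazzi property of $\mathrm{Ric}$ with the closedness of the Ricci form forces $\nabla\mathrm{Ric}=0$, and hence $\nabla R=0$, directly.)

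I expect the real difficulty to be the constancy of $s$: it hinges on having the Weitzenb\"ock identity for $W_{+}$ with exactly the right coefficients together with the K\"ahler relation $36\det W_{+}=s|W_{+}|^{2}$, which is precisely what makes the curvature terms collapse to a sign-definite expression. Without this input the surfaces of Kodaira dimension $\le 1$ --- above all the geometrically ruled surfaces over a curve of genus $\ge 2$, which do carry self-dual K\"ahler metrics (the products of constant-curvature factors, with $s\equiv 0$) --- are not separated out by characteristic numbers alone; the remaining Enriques--Kodaira bookkeeping behind ``$s\neq 0$ forces general type or $\mathbb{CP}_{2}$'' and the classification of conformally flat K\"ahler surfaces are then routine.
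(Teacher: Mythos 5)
Your outline reproduces the easy parts of the argument (the K\"ahler algebra of $W_{+}$, the identity $c_{1}^{2}-3c_{2}=\frac{1}{16\pi^{2}}\int|Ric_{0}|^{2}\,d\mu\geq 0$ for self-dual K\"ahler metrics, Miyaoka--Yau, Tanno for the scalar-flat case), but the step you yourself call central --- constancy of $s$ --- is a genuine gap. The identity you invoke is not available in the form you need. The standard Derdzi\'nski/Weitzenb\"ock identity $\int_{M}\left(|\nabla W_{+}|^{2}+\frac{s}{2}|W_{+}|^{2}-18\det W_{+}\right)d\mu=0$ is valid under the hypothesis $\delta W_{+}=0$; self-duality gives $W_{-}=0$, hence $\delta W_{-}=0$, but gives no a priori control of $\delta W_{+}$. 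Worse, on a K\"ahler surface $W_{+}=\frac{s}{12}\left(\frac{3}{2}\,\omega\otimes\omega-\mathrm{Id}_{\Lambda^{+}}\right)$ shows $\delta W_{+}$ is pointwise proportional to $\iota_{\nabla s}$ of a parallel tensor, so $\delta W_{+}=0$ is \emph{equivalent} to the constancy of $s$ --- the very thing to be proved. Moreover, no identity that does not genuinely use $W_{-}=0$ can have the shape you wrote: on every K\"ahler surface your right-hand integrand vanishes pointwise (since $36\det W_{+}=s|W_{+}|^{2}$), while a direct computation gives $|\nabla W_{+}|^{2}=4|\delta W_{+}|^{2}$ pointwise, so your left-hand integrand is a positive multiple of $|ds|^{2}$; such an identity would therefore prove that every compact K\"ahler surface has constant scalar curvature, which is false. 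So the identity, if it exists for self-dual metrics, must use self-duality in an essential way and with exactly the right coefficients (note also that its sign is opposite to the standard $\delta W_{+}=0$ formula), and you give neither a derivation nor an indication of where $W_{-}=0$ enters. This is precisely where the hard work sits in Derdzi\'nski's own treatment (via the conformal metric $s^{-2}g$, which has $\delta W_{+}=0$ for any K\"ahler surface), and it cannot be absorbed into ``a suitable normalization.''

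The detour is also unnecessary, and your remaining bookkeeping has errors that are only repaired by the route the paper actually takes. The paper never proves $s$ is constant: for a self-dual metric $\tau=\frac{1}{12\pi^{2}}\int|W_{+}|^{2}d\mu\geq 0$, and one splits into $\tau=0$, where $W_{+}\equiv 0$ and the surface is conformally flat K\"ahler (Tanno), and $\tau>0$, where the classification of surfaces gives $\chi\geq 0$, hence $c_{1}^{2}=2\chi+3\tau>0$, hence general type or $\mathbb{CP}_{2}$, and then Miyaoka--Yau $\chi\geq 3\tau$ against the self-dual K\"ahler inequality $\chi\leq 3\tau$ forces Einstein, hence parallel curvature. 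By contrast, your claims ``$s<0$ forces general type'' and ``$s>0$ forces del Pezzo'' are false for K\"ahler surfaces: ruled surfaces over curves of genus $\geq 2$ carry K\"ahler metrics of constant negative scalar curvature and satisfy $c_{1}^{2}\geq 3c_{2}$ (so your characteristic-number inequality does not exclude them), and $\mathbb{CP}_{1}\times\Sigma_{g}$ carries K\"ahler metrics of constant positive scalar curvature without being del Pezzo. What eliminates these is $\tau>0$ (self-duality together with $s\not\equiv 0$), which you never invoke; once you do invoke it, you are running the paper's argument, in which the constancy of $s$ plays no role.
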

\begin{proof}
We follow the argument given in [9]. 
Note that $\tau\geq 0$. Suppose $\tau=0$. Then $(M, g, \omega)$ is conformally flat K\"ahler.
Then by Tanno's theorem [28], we get the result. 
Suppose $\tau>0$. Note that only minimal surface with $\chi<0$ is $S_{2}$-bundle over $\Sigma_{g}$ for $g\geq 2$, which has $\tau=0$. 
Thus, $\chi\geq 0$. Then $c_{1}^{2}=2\chi+3\tau>0$. 
Then $(M, J)$ is general type or a rational surface. 
Suppose $(M, J)$ is a rational surface. Since $\tau>0$, $(M, J)$ is biholomorphic to $\mathbb{CP}_{2}$. 

Note that $\chi=3\tau$ for $\mathbb{CP}_{2}$ and $\chi\geq3\tau$ for a general type surface [31]. 
Thus, for a self-dual metric, we have 
\[\chi-3\tau=\frac{1}{8\pi^{2}}\int_{M}\left(\frac{s^{2}}{24}-|W_{+}|^{2}-\frac{|ric_{0}|^{2}}{2}\right)d\mu\geq 0.\]
For a K\"ahler surface, we have 
\[\int_{M}|W_{+}|^{2}d\mu=\int_{M}\frac{s^{2}}{24}d\mu.\]
Thus, we get $g$ is Einstein, Thus, $g$ is a self-dual Einstein K\"ahler surface. 
\end{proof}

Below we consider Poon's theorem [25], which was proved by using twistor spaces. 
LeBrun showed Poon's result using Seiberg-Witten theory [17]. 
We prove Poon's Theorem using Gursky's result [13]. 

\begin{Theorem}
(Gursky) Let $(M, g)$ be a compact, smooth oriented riemannian four manifold. 
Suppose $b_{+}\neq 0$ and $(M, g)$ is positive type. 
Then 
\[\int_{M}|W_{+}|^{2}d\mu\geq \frac{4\pi^{2}}{3}(2\chi+3\tau)(M),\]
with equality if and only if $[g]$ contains a K\"ahler-Einstein metric. 
\end{Theorem}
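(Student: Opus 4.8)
The plan is to combine the Gauss--Bonnet and signature formulas with a Bochner argument for a harmonic self-dual $2$-form, after normalizing the conformal class by solving the Yamabe problem [6], [27]. Adding the two characteristic number formulas gives
\[
\tfrac{4\pi^{2}}{3}(2\chi+3\tau)(M)=\tfrac13\int_{M}\Big(\tfrac{s^{2}}{24}+2|W_{+}|^{2}-\tfrac{|ric_{0}|^{2}}{2}\Big)d\mu ,
\]
so, since $\int_{M}|W_{+}|^{2}d\mu$ and $2\chi+3\tau$ are both conformally invariant, the inequality is equivalent to $\int_{M}\big(|W_{+}|^{2}-\tfrac{1}{24}s^{2}+\tfrac12|ric_{0}|^{2}\big)d\mu\ge 0$ for every metric in $[g]$; discarding the non-negative term $\tfrac12|ric_{0}|^{2}$, it suffices to prove $\int_{M}s^{2}\,d\mu\le 24\int_{M}|W_{+}|^{2}\,d\mu$ for one convenient representative. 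I take $g$ to be a Yamabe minimizer; since $[g]$ is of positive type, $g$ has constant scalar curvature $s>0$, and $\int_{M}s^{2}\,d\mu=Y([g])^{2}$, where $Y([g])>0$ is the Yamabe invariant.

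Since $b_{+}\neq 0$, Hodge theory furnishes a non-zero $g$-harmonic self-dual $2$-form $\omega$. From the Weitzenb\"ock formula $0=\nabla^{*}\nabla\omega-2W_{+}(\omega)+\tfrac{s}{3}\omega$, pairing with $\omega$, and using the trace-free eigenvalue bound $\langle W_{+}(\omega),\omega\rangle\le\sqrt{\tfrac23}\,|W_{+}|\,|\omega|^{2}$ together with the refined Kato inequality $|\nabla\omega|^{2}\ge\tfrac32\,|\nabla|\omega||^{2}$ for harmonic self-dual $2$-forms, a short computation shows that $u:=|\omega|^{1/2}$ satisfies, weakly on all of $M$,
\[
6\,\nabla^{*}\nabla u+s\,u\ \le\ 2\sqrt{6}\,|W_{+}|\,u
\]
(the zero set of $\omega$ being disposed of by the standard capacity argument, using $u\in H^{1}$). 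Multiplying by $u$ and integrating, then applying Cauchy--Schwarz on the right and the Yamabe--Sobolev inequality $\int_{M}(6|\nabla u|^{2}+su^{2})\,d\mu\ge Y([g])\,\|u\|_{L^{4}}^{2}$ on the left, and finally dividing by $\|u\|_{L^{4}}^{2}\neq 0$, yields $Y([g])\le 2\sqrt6\,\|W_{+}\|_{L^{2}}$, hence $\int_{M}s^{2}\,d\mu=Y([g])^{2}\le 24\int_{M}|W_{+}|^{2}\,d\mu$, as required. Here the refined Kato constant $\tfrac32$ is what pins the coefficient of the zeroth-order term to exactly $\tfrac{s}{6}$ after the substitution $u=|\omega|^{1/2}$; the ordinary Kato inequality would give only the weaker bound $\int_{M}s^{2}\,d\mu\le 96\int_{M}|W_{+}|^{2}\,d\mu$.

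For the equality statement, one direction is immediate: a K\"ahler--Einstein metric in $[g]$ (necessarily with $s>0$ here) has $ric_{0}\equiv 0$ and $|W_{+}|^{2}\equiv\tfrac{1}{24}s^{2}$, so equality holds in the characteristic-number identity. Conversely, equality forces $ric_{0}\equiv 0$ (so $g$ is Einstein) together with equality throughout the chain above: $u$ realizes the Yamabe infimum, hence is smooth and strictly positive, so $\omega$ is nowhere zero; equality in Cauchy--Schwarz makes $|W_{+}|$ a constant multiple of $|\omega|$; and equality in the eigenvalue bound and the refined Kato inequality forces $W_{+}$ to have the ``K\"ahler'' eigenvalue profile $(\tfrac{\hat s}{6},-\tfrac{\hat s}{12},-\tfrac{\hat s}{12})$ with $\omega$ spanning its top eigenline pointwise, and $\nabla(\omega/|\omega|)=0$ in the metric $\hat g:=u^{2}g$. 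Thus $\hat g$ is a K\"ahler metric of constant scalar curvature $\hat s=2\sqrt6\,|W_{+}|/|\omega|$ (a positive constant, since the conformal class is of positive type) lying in the conformal class of the Einstein metric $g$; by Derdzi\'{n}ski's theorem on conformally Einstein K\"ahler surfaces this forces $\hat g$ itself to be Einstein, i.e.\ K\"ahler--Einstein, so $[g]$ contains a K\"ahler--Einstein metric.

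The main obstacle is not the inequality, which is routine once the refined Kato constant is in hand, but the equality discussion: extracting ``$\hat g=u^{2}g$ is K\"ahler'' from the saturated Kato and eigenvalue inequalities, and then invoking Derdzi\'{n}ski-type rigidity to upgrade ``conformally K\"ahler and Einstein'' to ``K\"ahler--Einstein''. Verifying that the weak differential inequality for $u=|\omega|^{1/2}$ holds across the zero set of $\omega$, and keeping the curvature normalization constants consistent with the conventions used in the Gauss--Bonnet and signature formulas, are the remaining technical points.
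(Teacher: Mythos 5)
The paper itself contains no proof of this statement: Theorem 2 is quoted from Gursky [13] and is then used as a black box in the proof of Theorem 3, so there is no internal argument to compare yours against; I can only assess your reconstruction on its own merits. The inequality half is correct, and is essentially the standard streamlined form of Gursky's argument: for a harmonic self-dual form $\omega$, the Weitzenb\"ock formula together with the eigenvalue bound $W_{+}(\omega,\omega)\le\sqrt{2/3}\,|W_{+}|\,|\omega|^{2}$ and the refined Kato inequality with constant $3/2$ gives, for $u=|\omega|^{1/2}$ (the only exponent that works, as your coefficient count shows), the weak inequality $6\,\nabla^{*}\nabla u+su\le 2\sqrt{6}\,|W_{+}|u$, hence $Y([g])\le 2\sqrt{6}\,\|W_{+}\|_{L^{2}}$, and evaluating the Gauss--Bonnet/signature identity at the Yamabe minimizer and discarding $|ric_{0}|^{2}$ yields $\int_{M}|W_{+}|^{2}d\mu\ge\frac{4\pi^{2}}{3}(2\chi+3\tau)$. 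The capacity point you defer is genuinely needed but standard, since the nodal set of a harmonic form has codimension two, so $u\in H^{1}$ and the weak inequality passes across the zero set; the ``if'' direction of the equality claim is also fine, by conformal invariance of $\int|W_{+}|^{2}d\mu$ and $|W_{+}|^{2}=s^{2}/24$ for K\"ahler metrics.

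The genuine gap is at the crux of the ``only if'' direction. Equality does force $ric_{0}\equiv0$, $|W_{+}|=c|\omega|$, the degenerate eigenvalue profile with $\omega$ spanning the top eigenline, pointwise equality in the refined Kato inequality, and $u>0$ by Yamabe-minimizer regularity (you should also dispose of $c=0$, which would give $W_{+}\equiv0$ and contradict $Y([g])>0$). But your pivotal assertion that these saturations imply $\nabla(\omega/|\omega|)=0$ with respect to $\hat g=u^{2}g$ --- i.e.\ that $\hat g$ is K\"ahler rather than merely almost-K\"ahler (a closed self-dual form of constant length is \emph{not} automatically parallel) --- is stated, not proved, and it is exactly where the real content of Gursky's equality analysis lies. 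To close it one must either show that pointwise Kato equality makes the closed form $\omega$ a twistor (conformal Killing) $2$-form, hence conformally parallel where nonvanishing, or go through Derdzi\'{n}ski's theorem that a compact Einstein four-manifold with degenerate, nowhere-zero $W_{+}$ is conformally K\"ahler via the factor $|W_{+}|^{2/3}$, and only then use the proportionality $|W_{+}|=c|\omega|$ (which forces the K\"ahler representative to have constant scalar curvature) to invoke the rigidity you cite. That extra input is not cosmetic: the Page metric is Einstein, of positive type, with $b_{+}=1$ and degenerate nowhere-zero $W_{+}$, conformally K\"ahler but not conformally K\"ahler--Einstein, so ``Einstein plus degenerate $W_{+}$ plus conformally K\"ahler'' alone cannot yield the conclusion; the saturation identities must be used in an essential way, and as written your argument does not do so.
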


\begin{Theorem}
(Poon) Let $(M, g)$ be a compact simply connected self-dual four manifold with positive scalar curvature. 
If $b_{2}=1$, then $(M, g)$ is conformally equivalent to $\mathbb{CP}_{2}$ with the Fubini-Study metric. 
\end{Theorem}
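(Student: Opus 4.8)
The plan is to turn the topological hypotheses into a sharp numerical inequality via Gursky's theorem (Theorem 2) and then read off the geometry from Derdzi\'nski's theorem (Theorem 1). First I would pin down the topology. Since $M$ is simply connected, $b_{1}=0$, so $\chi(M)=2+b_{2}=3$. Because $g$ is self-dual ($W_{-}=0$) with positive scalar curvature, the Weitzenb\"ock formula $\Delta\alpha=\nabla^{*}\nabla\alpha-2W_{-}(\alpha)+\tfrac{s}{3}\alpha$ applied to a harmonic anti-self-dual $2$-form $\alpha$ gives $0=\|\nabla\alpha\|^{2}+\tfrac13\int_{M}s\,|\alpha|^{2}$, forcing $\alpha\equiv 0$; hence $b_{-}=0$, so $b_{+}=b_{2}=1$ and $\tau(M)=b_{+}-b_{-}=1$. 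In particular $b_{+}\neq 0$ and $(M,g)$ is of positive type, so Gursky's inequality applies and yields $\int_{M}|W_{+}|^{2}\,d\mu\ \geq\ \tfrac{4\pi^{2}}{3}\,(2\chi+3\tau)(M)=\tfrac{4\pi^{2}}{3}\cdot 9=12\pi^{2}$.

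Next I would compute the left-hand side directly. Since $W_{-}=0$, the signature formula $\tau=\tfrac{1}{12\pi^{2}}\int_{M}\bigl(|W_{+}|^{2}-|W_{-}|^{2}\bigr)d\mu$ gives $\int_{M}|W_{+}|^{2}\,d\mu=12\pi^{2}\tau=12\pi^{2}$. Therefore equality holds in Gursky's inequality, and by the equality clause of Theorem 2 the conformal class $[g]$ contains a K\"ahler--Einstein metric $g'$.

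Now I would use that self-duality is conformally invariant: $g'\in[g]$ is again self-dual, and being K\"ahler it is a compact self-dual K\"ahler surface, so Theorem 1 shows $g'$ is locally symmetric. As $M$ is compact and simply connected, $g'$ is in fact a globally symmetric metric; it is Einstein with positive scalar curvature and lives on a manifold with $b_{2}=1$. Among the compact simply connected symmetric Einstein four-manifolds of positive scalar curvature — $S^{4}$, $\mathbb{CP}_{2}$, and $S^{2}\times S^{2}$ — only $\mathbb{CP}_{2}$ has $b_{2}=1$ (the others have $b_{2}=0$ and $b_{2}=2$), so $g'$ is a rescaling of the Fubini--Study metric. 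Hence $g$ is conformally equivalent to $\mathbb{CP}_{2}$ with the Fubini--Study metric.

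The routine points are the Weitzenb\"ock vanishing of $b_{-}$ and the bookkeeping with $\chi$ and $\tau$. The step that requires the most care is the equality discussion: one must check that the hypotheses of Gursky's theorem ($b_{+}\neq 0$, positive type) are genuinely satisfied and that its equality case really produces a K\"ahler--Einstein representative \emph{inside} $[g]$, not merely a matching of integrals. Once that is secured, the passage through Theorem 1 and the identification of the symmetric model from its Betti numbers is elementary.
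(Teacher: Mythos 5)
Your proposal is correct, and its core is the same as the paper's: use the Weitzenb\"ock formula for anti-self-dual $2$-forms to get $b_{-}=0$, verify that the topological numbers force equality in Gursky's inequality (Theorem 2), and extract a K\"ahler--Einstein representative $g'\in[g]$. Your bookkeeping is sound: $\chi=3$, $\tau=b_{+}-b_{-}=1$, the signature formula gives $\int_{M}|W_{+}|^{2}d\mu=12\pi^{2}\tau$, which matches the Gursky bound $\tfrac{4\pi^{2}}{3}(2\chi+3\tau)=12\pi^{2}$; this is equivalent to the paper's route through $\chi-3\tau=0$.

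Where you genuinely diverge is the endgame. The paper invokes Donaldson's theorem to identify $M$ up to homotopy with $\mathbb{CP}_{2}$ and then identifies the K\"ahler--Einstein metric via the complex-geometric reference [31] (equality $\chi=3\tau$ in the Miyaoka--Yau setting forces $(M,J)\cong\mathbb{CP}_{2}$ and $g'$ the Fubini--Study metric). You instead observe that $g'$ is self-dual by conformal invariance of $W_{-}=0$, apply Theorem 1 (Derdzi\'nski) to conclude $g'$ is locally symmetric, and then use the classification of compact simply connected symmetric four-manifolds with $b_{2}=1$ to land on the Fubini--Study metric; since the intersection-form computation $\tau=b_{+}-b_{-}=1$ needs no Donaldson input, you bypass that theorem entirely. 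Both routes are valid: the paper's leans on complex-surface classification, while yours stays inside the paper's own toolkit (Theorem 1 plus the Cartan classification of symmetric spaces) and is arguably more self-contained. Two small points to make explicit if you write this up: the positivity of the scalar curvature of $g'$ follows because the sign of a constant-scalar-curvature (in particular Einstein) metric in a conformal class agrees with the sign of its Yamabe constant, which is positive here; and the passage from locally symmetric to globally symmetric uses that a complete simply connected locally symmetric space is symmetric, where isotropy irreducibility of $\mathbb{CP}_{2}=SU(3)/U(2)$ then pins the metric down as Fubini--Study up to scale.
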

\begin{proof}
From a Weitzenb\"ock formula for an anti-self-dual 2-form, 
\[\Delta\alpha=\nabla^{*}\nabla\alpha-2W_{-}(\alpha, \cdot)+\frac{s}{3}\alpha,\]
we get $b_{-}=0$. 
By Donaldson's theorem [11], $M$ is homotopy equivalent to $\mathbb{CP}_{2}$.

Let $g$ be a self-dual metric with positive scalar curvature on $M$.
Then by Theorem 2, we have 
\[\int_{M}\left(|W_{+}|^{2}-\frac{s^{2}}{24}+\frac{|ric_{0}|^{2}}{2}\right)d\mu\geq 0.\]
From this, we get 
\[\chi-3\tau=\frac{1}{8\pi^{2}}\int_{M}\left(\frac{s^{2}}{24}-|W_{+}|^{2}+3|W_{-}|^{2}-\frac{|ric_{0}|^{2}}{2}\right)d\mu\leq 0,\]
for a self-dual metric. 
On the other hand, we have $\chi=2+b_{+}=3$ and $\tau=1$. Thus, $\chi-3\tau=0$ on $M$. 
Thus, we get the equality in Theorem 2. 
Therefore, $[g]$ contains a K\"ahler-Einstein metric $(g', J)$. 
Then $(M, J)$ is biholomorphic to $\mathbb{CP}_{2}$ [31]. 
Thus, $g'$ is the Fubini-Study metric on $\mathbb{CP}_{2}$ up to rescaling. 
\end{proof}

Suppose $\mathbb{CP}_{2}$ admits a self-dual metric with zero scalar curvature. Then from 
\[0=\chi-3\tau=\frac{1}{8\pi^{2}}\int_{M}\left(\frac{s^{2}}{24}-|W_{+}|^{2}+3|W_{-}|^{2}-\frac{|ric_{0}|^{2}}{2}\right)d\mu
=\frac{1}{8\pi^{2}}\int_{M}\left(-|W_{+}|^{2}-\frac{|ric_{0}|^{2}}{2}\right)d\mu.\]
Thus, we get $W_{+}=0$. This implies $\tau=0$, which is a contradiction.

As far as we know, it is not known whether $\mathbb{CP}_{2}$ can have a self-dual metric with negative scalar curvature [25]. 
On the other hand, LeBrun showed the same integral inequality in Gursky's theorem 
for an almost-K\"ahler metric 
on the underlying four manifold of a rational or ruled surface with $2\chi+3\tau\geq 0$.
For more general results, we refer to [19]. 

\begin{Theorem}
(LeBrun) Suppose the underlying smooth oriented four manifold $M$ of a rational or ruled surface with $2\chi+3\tau\geq 0$ 
admits an almost-K\"ahler metric $(g, \omega)$. Then, we have 
\[\int_{M}|W_{+}|^{2}d\mu\geq \frac{4\pi^{2}}{3}(2\chi+3\tau)(M),\]
with equality if and only if $[g]$ contains a K\"ahler-Einstein metric with positive scalar curvature. 
In particular, when $2\chi+3\tau=0$, equality does not occur. 
\end{Theorem}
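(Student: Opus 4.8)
The plan is to adapt the proof of Theorem~2 (Gursky's theorem), replacing positivity of the Yamabe constant --- which an almost-K\"ahler metric need not enjoy --- by Seiberg--Witten input, following LeBrun; Theorem~2 itself cannot be quoted, since the conformal class of $g$ need not be of positive type. To set up, recall that the underlying smooth oriented four-manifold $M$ of a rational or ruled surface has $b_{+}(M)=1$, and that $(2\chi+3\tau)(M)$ equals the Chern number $c_{1}^{2}$ of its complex structure, which by hypothesis is $\geq 0$. The form $\omega$ makes $(M,\omega)$ a symplectic manifold; fixing an $\omega$-compatible almost complex structure $J$ gives the canonical $\mathrm{Spin}^{c}$-structure $\mathfrak{s}_{\omega}$, with $c_{1}(\mathfrak{s}_{\omega})=c_{1}(M,J)=:c_{1}$.

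Since $b_{+}=1$, the Seiberg--Witten invariant is chamber-dependent, and by a theorem of Taubes it equals $\pm 1$ for $\mathfrak{s}_{\omega}$ in the chamber determined by $[\omega]$; in particular $c_{1}$ is a monopole class, and LeBrun's curvature estimates for a monopole class apply to every Riemannian metric $g$ on $M$. Pairing the Lichnerowicz--Weitzenb\"ock formula for a (perturbed) monopole with its spinor and integrating gives the scalar estimate $\int_{M}s_{g}^{2}\,d\mu\geq 32\pi^{2}(c_{1}^{+})^{2}$, and a second Weitzenb\"ock argument for the self-dual $2$-form $F_{A}^{+}$, together with the refined Kato inequality and the Gauss--Bonnet and signature identities, sharpens this to
\[\int_{M}|W_{+}|^{2}\,d\mu\geq\frac{4\pi^{2}}{3}(c_{1}^{+})^{2}.\]
Because $b_{+}=1$, the argument runs through Taubes' family of $r\omega_{g}$-perturbed equations and a passage to the limit $r\to\infty$; this is where the almost-K\"ahler hypothesis is used. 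Namely, $\omega$ is $g$-self-dual, harmonic, and spans $H^{+}$, so $c_{1}^{+}$ is a multiple of $[\omega]$, and the standard almost-K\"ahler identities relating $s$, the $*$-scalar curvature $s^{*}=2R(\omega,\omega)$, $|\nabla\omega|^{2}$, and the $\omega$-component of $W_{+}$ make the limiting estimate sharp.

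Since the intersection form is negative definite on the anti-self-dual harmonic forms, $(c_{1}^{+})^{2}\geq c_{1}^{2}=(2\chi+3\tau)(M)$, so the displayed inequality gives $\int_{M}|W_{+}|^{2}\,d\mu\geq\frac{4\pi^{2}}{3}(2\chi+3\tau)(M)$. If equality holds, tracking the estimates forces the limiting monopole to be parallel, which, as in LeBrun's analysis, forces $[g]$ to contain a K\"ahler--Einstein metric; this metric must be Fano --- and so of positive scalar curvature, with $c_{1}^{2}>0$ --- because a rational or ruled surface has Kodaira dimension $-\infty$ and therefore admits no K\"ahler--Einstein metric of non-positive scalar curvature. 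In particular, when $2\chi+3\tau=0$ there is no such Fano structure, so equality cannot occur.

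The main obstacle is the sharp $W_{+}$-estimate for a monopole class in the $b_{+}=1$ case: one must combine the Dirac Weitzenb\"ock formula with a Weitzenb\"ock formula for $F_{A}^{+}$ and the refined Kato inequality, carry Taubes' perturbed solutions to the limit $r\to\infty$, and invoke the almost-K\"ahler identities to keep the constant optimal. A secondary difficulty is the equality analysis: showing that a critical Seiberg--Witten solution genuinely yields a K\"ahler--Einstein representative in $[g]$, rather than merely a conformally K\"ahler one.
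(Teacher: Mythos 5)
The paper itself offers no proof of this statement: Theorem 4 is imported verbatim from LeBrun [19], so your reconstruction has to stand on its own, and as written it contains a step that fails. You treat $c_{1}$ of the canonical $\mathrm{Spin}^{c}$ structure as a monopole class whose \emph{unperturbed} curvature estimates "apply to every Riemannian metric $g$ on $M$", and in particular you assert $\int_{M}s^{2}d\mu\geq 32\pi^{2}(c_{1}^{+})^{2}$ for arbitrary metrics. On the manifolds in question this is false: a rational or ruled surface with $2\chi+3\tau\geq 0$ has $b_{+}=1$, every symplectic form on it has $c_{1}\cdot[\omega]>0$, and $M$ carries metrics of positive scalar curvature, so $\inf_{g}\int_{M}s_{g}^{2}d\mu=0$ while $32\pi^{2}c_{1}^{2}$ is, e.g., $288\pi^{2}$ on $\mathbb{CP}_{2}$. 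The underlying reason is the chamber structure when $b_{+}=1$: Taubes' nonvanishing holds in the chamber of large $r\omega$-perturbations, and with $c_{1}\cdot[\omega]>0$ this chamber cannot be joined to the zero-perturbation pair $(g,0)$ through good pairs -- note that the paper's Proposition 1 needs exactly the opposite sign $c_{1}\cdot\omega<0$ to make that connection. Consequently the perturbation never drops out, the estimates one actually gets involve $c_{1}\cdot[\omega]$ and $[\omega]^{2}$, and the almost-K\"ahler hypothesis is not, as you suggest, merely what "makes the limiting estimate sharp": it is what lets one convert those $[\omega]$-dependent bounds into a bound on $\int|W_{+}|^{2}$, via the facts that $\omega$ is the $g$-self-dual harmonic form of pointwise length $\sqrt{2}$, the Weitzenb\"ock formula applied to $\omega$ itself, and conformal rescaling within $[g]$. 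This is precisely why the result cannot be obtained by transplanting the negative-chamber argument, and why Gursky's theorem is not available here.

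Two further points, secondary to the one above. First, even for a genuine monopole class, the package "scalar estimate plus Weitzenb\"ock for $F_{A}^{+}$ plus refined Kato" yields LeBrun's mixed estimate on $\int(s-\sqrt{6}|W_{+}|)^{2}d\mu$, which by itself gives no lower bound on $\int|W_{+}|^{2}$ (a very negative scalar curvature can carry the whole estimate); extracting the Weyl term requires an additional conformal-factor optimization, which your outline does not supply. Second, the equality analysis -- producing an honest K\"ahler--Einstein representative of $[g]$ of positive scalar curvature, and ruling out equality when $2\chi+3\tau=0$ -- is asserted rather than argued. You flag both of these as "obstacles", which is fair, but together with the chamber problem it means the proposal is a plan for LeBrun's theorem rather than a proof of it, and the specific route you propose (unperturbed estimates valid for all metrics, then specialization) would not go through.
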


\begin{Theorem}
Let $M$ be the underlying smooth oriented four manifold of a rational or ruled surface with $2\chi+3\tau\geq 0$.
If $M$ admits a self-dual almost-K\"ahler metric $(M, g, \omega)$, then $(M, g, \omega)$
is $\mathbb{CP}_{2}$ with a constant multiple of the Fubini-Study metric.
In particular, a self-dual almost-K\"ahler metric on a manifold $M$ which is diffeomorphic to $\mathbb{CP}_{2}$ 
is the Fubini-Study metric on up to rescaling. 
\end{Theorem}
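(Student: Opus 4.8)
The plan is to combine LeBrun's integral inequality (Theorem 5) with the topological constraints forced by self-duality and positivity of scalar curvature, in close analogy with the proof of Poon's theorem (Theorem 3). First I would show that a self-dual almost-K\"ahler metric on such an $M$ must have positive scalar curvature, or at least cannot have non-positive scalar curvature in a way that obstructs the argument. Recall that on an almost-K\"ahler four manifold the fundamental form $\omega$ is a self-dual harmonic form (being closed and, by self-duality of $g$, automatically coclosed since $*\omega=\omega$), so $b_{+}\geq 1$. In particular $M$ carries a non-trivial self-dual harmonic form, so in the Weitzenb\"ock/Bochner dichotomy the scalar curvature cannot be everywhere negative; more precisely, one invokes that an almost-K\"ahler metric with $s\leq 0$ on a rational or ruled surface is excluded, or one runs the scalar-curvature-sign analysis exactly as in the paragraph following Theorem 3 to rule out the zero and negative cases. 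So the metric is of positive type, and after a conformal change within $[g]$ we may also assume $s$ is a positive constant if needed — though since $g$ is already self-dual the cleanest route is to work directly with $g$.

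The second step is the Gauss--Bonnet/signature comparison. For any self-dual metric one has the identity
\[
\chi - 3\tau = \frac{1}{8\pi^{2}}\int_{M}\left(\frac{s^{2}}{24} - |W_{+}|^{2} - \frac{|ric_{0}|^{2}}{2}\right)d\mu,
\]
since $W_{-}=0$. On the other hand LeBrun's Theorem 5 gives
\[
\int_{M}|W_{+}|^{2}\,d\mu \;\geq\; \frac{4\pi^{2}}{3}(2\chi+3\tau) \;=\; \frac{4\pi^{2}}{3}\int_{M}\frac{s^{2}}{24\cdot\frac{4\pi^{2}}{3}}\cdots
\]
— more transparently, rewriting $\frac{4\pi^{2}}{3}(2\chi+3\tau) = \frac{1}{8\pi^{2}}\int_{M}\frac{s^{2}}{24}\,d\mu$ is \emph{not} an identity in general, so instead I combine the two displays directly: LeBrun's inequality says $\int_M |W_+|^2 \geq \frac{4\pi^2}{3}(2\chi+3\tau)$, while Gauss--Bonnet/signature for a self-dual metric gives $2\chi+3\tau = \chi - 3\tau + 4\tau$ and one feeds in $\tau$, $\chi$. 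The point I would actually exploit: for a self-dual metric $\frac{1}{4\pi^2}\int_M(\frac{s^2}{24}-|W_+|^2-\frac{|ric_0|^2}{2}) = \frac13(\chi - 3\tau)$, and separately the Gursky-type bound forces $2\chi + 3\tau \leq \frac{3}{4\pi^2}\int_M |W_+|^2 \leq \frac{3}{4\pi^2}\int_M \frac{s^2}{24}$ using $\int|W_+|^2 = \int \frac{s^2}{24}$ on almost-K\"ahler... but that last equality is a \emph{K\"ahler} identity, not an almost-K\"ahler one — this is precisely where the real work lies.

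So the main obstacle, and the place where the argument genuinely departs from the K\"ahler case, is that on a strictly almost-K\"ahler four manifold one does \emph{not} have the clean pointwise relation $|W_+|^2 = \frac{s^{*2}}{24}$ nor $\int|W_+|^2 = \int \frac{s^2}{24}$; these get corrected by terms involving $|\nabla\omega|^2$ and the difference $s - s^{*}$. The way to finish is therefore \emph{not} to reprove a pointwise Kähler identity but to push the equality case of LeBrun's Theorem 5 through: I would show that the self-duality hypothesis, combined with $2\chi+3\tau\geq 0$ and the topology of rational/ruled surfaces, forces \emph{equality} in Theorem 5, hence $[g]$ contains a K\"ahler--Einstein metric $g'$ of positive scalar curvature. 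Concretely: among rational and ruled surfaces with $2\chi+3\tau\geq 0$, a self-dual metric forces strong constraints — one runs the same $\chi - 3\tau$ computation as in Poon's proof to pin down $\tau$ and $b_+$, rules out $2\chi+3\tau=0$ (LeBrun says equality cannot occur there, and the self-dual identity then forces $W_+=0$, hence $\tau=0$, reducing to a conformally flat case handled separately or contradicted), and in the remaining case $2\chi+3\tau>0$ the combination of the self-dual Gauss--Bonnet identity with LeBrun's inequality squeezes everything into the equality case.

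Once equality holds in Theorem 5, $[g]$ contains a K\"ahler--Einstein metric $(g',J)$ with $s>0$; then $(M,J)$ is a del Pezzo surface, and being also self-dual (self-duality is a conformally invariant condition, so $g'$ is self-dual) and K\"ahler--Einstein, Derdzi\'nski's classification (or Theorem 1 together with the positive-scalar-curvature self-dual K\"ahler--Einstein list) forces $(M,J)\cong\mathbb{CP}_2$ with $g'$ the Fubini--Study metric up to scale. Finally, to conclude that the \emph{original} $g$ equals a constant multiple of Fubini--Study and not merely a conformal rescaling of it, I would use that $g$ is already a constant-scalar-curvature metric in its conformal class (being self-dual Einstein... no — rather: one shows the conformal factor is constant). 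The cleanest argument: on $\mathbb{CP}_2$ the self-dual condition plus $\chi-3\tau=0$ gives, via the self-dual identity, $\int_M(\frac{s^2}{24} - |W_+|^2 - \frac{|ric_0|^2}{2})\,d\mu = 0$, and since $[g]$ is the Fubini--Study class one uses the rigidity of the Yamabe minimizer / the fact that Fubini--Study uniquely minimizes $\int|W_+|^2$ in its conformal class with equality characterizing it, to conclude $g$ itself is (a constant multiple of) Fubini--Study; in particular the almost-complex structure $\omega$ is then compatible with a genuine K\"ahler structure. I expect the delicate points to be (i) excluding non-positive scalar curvature for the almost-K\"ahler self-dual metric, and (ii) the final promotion from "conformal to Fubini--Study" to "equal to Fubini--Study up to constant," which requires an extra rigidity input rather than just the integral inequalities.
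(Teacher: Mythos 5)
Your overall strategy --- force equality in LeBrun's almost-K\"ahler inequality (Theorem 4) and extract a positive K\"ahler--Einstein metric in $[g]$ --- is indeed the paper's strategy, and you correctly identify that the K\"ahler identity $\int|W_{+}|^{2}=\int\frac{s^{2}}{24}$ is unavailable. But there are two genuine gaps. First, your opening step, positivity of the scalar curvature (or of the type), is both unjustified and load-bearing in your sketch: the Weitzenb\"ock formula for self-dual $2$-forms contains the term $-2W_{+}(\alpha,\cdot)$, so the existence of a nontrivial self-dual harmonic form does not exclude everywhere-negative scalar curvature (any negative-scalar-curvature K\"ahler surface refutes that dichotomy), and ``an almost-K\"ahler metric with $s\leq 0$ on a rational or ruled surface is excluded'' is not a known fact --- the paper even remarks that it is open whether $\mathbb{CP}_{2}$ admits a self-dual metric of negative scalar curvature, which is precisely why it uses LeBrun's Theorem 4 (no sign hypothesis) instead of Gursky's Theorem 2. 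Since your identification of the topology (``pin down $\tau$ and $b_{+}$ as in Poon's proof'') runs through $b_{-}=0$ via that positivity, it inherits the gap. The paper avoids this entirely: substituting $2\chi+3\tau=\frac{1}{4\pi^{2}}\int\bigl(\frac{s^{2}}{24}+2|W_{+}|^{2}-\frac{|ric_{0}|^{2}}{2}\bigr)d\mu$ into Theorem 4 gives $\int|W_{+}|^{2}d\mu\geq\int\bigl(\frac{s^{2}}{24}-\frac{|ric_{0}|^{2}}{2}\bigr)d\mu$, hence $\chi-3\tau\leq 0$ for the self-dual metric; among rational or ruled surfaces with $2\chi+3\tau\geq 0$ this forces $M$ diffeomorphic to $\mathbb{CP}_{2}$ or to an $S^{2}$-bundle over $T^{2}$, the latter excluded because $\tau=0$ together with self-duality forces $W_{+}=0$, i.e.\ equality in Theorem 4 at $2\chi+3\tau=0$, which Theorem 4 forbids; and on $\mathbb{CP}_{2}$ one has $\chi=3\tau$ exactly, so $\int|W_{+}|^{2}d\mu=12\pi^{2}\tau=\frac{4\pi^{2}}{3}(2\chi+3\tau)$ and equality holds. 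You gesture at this (``squeezes everything into the equality case'') but never exhibit the inequality $\chi\leq 3\tau$ or the classification step, which is where the theorem actually gets proved.

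Second, the promotion from ``$[g]$ contains the Fubini--Study metric'' to ``$g$ is a constant multiple of the Fubini--Study metric'' is left unresolved, and the tools you suggest do not work: $\int|W_{+}|^{2}d\mu$ is conformally invariant in dimension four, so ``uniquely minimizes $\int|W_{+}|^{2}$ in its conformal class'' is vacuous, and Obata/Yamabe-type uniqueness applies to constant-scalar-curvature or Einstein metrics, which $g$ is not known to be. The paper's argument uses the almost-K\"ahler structure itself: the K\"ahler form $\omega'$ of $g'$ is self-dual harmonic for $g$ by conformal invariance, $b_{+}(\mathbb{CP}_{2})=1$ gives $\omega'=c\,\omega$ for a constant $c$, and since $\|\omega\|_{g}\equiv\sqrt{2}\equiv\|\omega'\|_{g'}$ the conformal factor relating $g$ and $g'$ must equal the constant $|c|$, so $g'=|c|g$. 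Without an argument of this kind your proof yields only a conformal statement, not the theorem as stated.
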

\begin{proof}
Note that 
\[2\chi+3\tau=\frac{1}{4\pi^{2}}\int_{M}\left(\frac{s^{2}}{24}+2|W_{+}|^{2}-\frac{|ric_{0}|^{2}}{2}\right)d\mu. \]
From Theorem 4, we have 
\[\int_{M}|W_{+}|^{2}d\mu\geq \int_{M}\left( \frac{s^{2}}{24}-\frac{|ric_{0}|^{2}}{2}\right)d\mu.\]
On the other hand, we have 
\[\chi-3\tau=\frac{1}{8\pi^{2}}\int_{M}\left( \frac{s^{2}}{24}-|W_{+}|^{2}+3|W_{-}|^{2}-\frac{|ric_{0}|^{2}}{2}\right)d\mu.\]
For a self-dual metric almost-K\"ahler metric on the underlying smooth four manifold of a rational or ruled surface with $2\chi+3\tau\geq 0$,
we get $\chi\leq 3\tau$. On the other hand, among underlying smooth oriented four manifolds
of rational or ruled surfaces with $2\chi+3\tau\geq 0$, only a manifold diffeomorphic to $\mathbb{CP}_{2}$ has $\chi\leq 3\tau$. 
Note that we have $\chi=3\tau$ on a manifold diffeomorphic to $\mathbb{CP}_{2}$. Using this for a self-dual metric, we have 
\[\int_{M}|W_{+}|^{2}d\mu=12\pi^{2}\tau=\frac{4\pi^{2}}{3}(2\chi+3\tau).\]
Thus, the equality holds in Theorem 4. Therefore, $[g]$ contains a K\"ahler-Einstein metric with positive scalar curvature, 
$(M, g', \omega', J)$. 
Thus, $M$ is biholomorphic to $\mathbb{CP}_{2}$ [31]. 
From $\chi=3\tau$ on $\mathbb{CP}_{2}$, we get a K\"ahler-Einstein metric on  $\mathbb{CP}_{2}$
is self-dual. Thus, $g'$ is the Fubini-Study metric up to rescaling. 
Note that a self-dual harmonic 2-form is a conformally invariant condition 
and therefore, $\omega'$ is a self-dual harmonic 2-form with respect to $g$. 
Since $b_{+}=1$ on $\mathbb{CP}_{2}$, we get $\omega'=c\omega$ for a constant $c$. 
Since $||\omega'||_{g'}=\sqrt{2}$ and $||\omega||_{g}=\sqrt{2}$,
we get 
\[||\omega'||_{g'}=|c|||\omega||_{g'}=||\omega||_{g}=\sqrt{2}.\]
Since $g'$ is conformal to $g$ and $|c|||\omega||_{g'}=||\omega||_{g}$, $g'=|c|g$. 
Thus, $(M, g, \omega)$ is the Fubini-Study metric up to rescaling. .

For $S^{2}$-bundle over $T^{2}$, we have $\chi=\tau=0$. 
Moreover, a self-dual metric $S^{2}$-bundle over $T^{2}$ is anti-self-dual since $\tau=0$. 
Thus, $W_{+}=0$ and therefore the equality occurs with $2\chi+3\tau=0$ in Theorem 4.
On the other hand, by Theorem 4, equality does not occur when $2\chi+3\tau=0$. 
Thus, $S^{2}$-bundle over $T^{2}$ does not admit a self-dual almost-K\"ahler metric. 
\end{proof}

\newtheorem{Corollary}{Corollary}
\begin{Corollary}
Let $(M, g, \omega)$ be a compact self-dual almost-K\"ahler four-manifold with positive type. 
Then $(M, g, \omega)$ is $\mathbb{CP}_{2}$ with the Fubini-Study metric up to rescaling. 
\end{Corollary}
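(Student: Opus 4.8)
The plan is to deduce the statement from Theorem 5, so the goal is to show that the underlying smooth oriented four-manifold $M$ is diffeomorphic to $\mathbb{CP}_2$, which is the underlying manifold of a rational surface and satisfies $2\chi+3\tau=9\geq 0$. First I would pass to a conformal representative: since the vanishing of $W_-$ is a conformally invariant condition and $[g]$ is of positive type, the solution of the Yamabe problem produces a metric $g_0\in[g]$ with constant positive scalar curvature that is still self-dual. Note that $\omega$ is a closed, non-degenerate two-form, so $(M,\omega)$ is a compact symplectic four-manifold; in particular $b_+(M)\geq 1$, and $M$ carries the positive-scalar-curvature metric $g_0$.

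Next I would invoke Seiberg--Witten theory: a compact symplectic four-manifold admitting a metric of positive scalar curvature is diffeomorphic to the underlying four-manifold of a rational or ruled complex surface (Taubes' non-vanishing theorem, combined with the classification of symplectic four-manifolds with $b_+=1$ due to Liu and Ohta--Ono). To pin down which one it is, apply the Weitzenb\"ock formula for anti-self-dual harmonic two-forms on $(M,g_0)$, exactly as in the proof of Theorem 3: since $W_-(g_0)=0$ and $s(g_0)>0$, every anti-self-dual harmonic two-form vanishes, hence $b_-(M)=0$. Among the underlying four-manifolds of rational or ruled surfaces, only $\mathbb{CP}_2$ has $b_-=0$; every other one has $b_-\geq 1$ (an $\overline{\mathbb{CP}_2}$ summand, or an $S^2\times S^2$-type or ruled factor, contributes to $b_-$). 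Therefore $M$ is diffeomorphic to $\mathbb{CP}_2$, and in particular it is the underlying manifold of a rational surface with $2\chi+3\tau=9\geq 0$.

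Finally, apply Theorem 5 to the self-dual almost-K\"ahler metric $(g,\omega)$ on $M$: it follows that $(M,g,\omega)$ is $\mathbb{CP}_2$ with a constant multiple of the Fubini--Study metric, which is the assertion. The only ingredient not already developed in the paper is the Seiberg--Witten classification of symplectic four-manifolds carrying positive-scalar-curvature metrics, and that is the step to be pinned down carefully, including the precise references; the remainder is the Weitzenb\"ock argument of Theorem 3 together with an arithmetic comparison of Betti numbers, plus the routine checks that the rescaling in the first step preserves self-duality (immediate, as $W_-=0$ is conformally invariant) and that $b_-$ may be computed with any metric. If one wished to avoid the full classification, one could try to use only $b_+(M)=1$ and $b_-(M)=0$ and then quote Poon's Theorem 3, but this would require establishing simple-connectivity separately, so the route through Theorem 5 appears cleaner.
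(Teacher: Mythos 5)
Your proposal is correct and follows essentially the same route as the paper: pass to a positive-scalar-curvature conformal representative, use the Seiberg--Witten/symplectic classification (Liu, Ohta--Ono) to see $M$ is rational or ruled, kill $b_-$ via the Weitzenb\"ock formula for anti-self-dual harmonic two-forms on the self-dual positive-scalar-curvature metric, conclude $M$ is diffeomorphic to $\mathbb{CP}_2$, and invoke Theorem 5. The extra details you supply (conformal invariance of $W_-=0$, the Betti-number check that only $\mathbb{CP}_2$ among rational or ruled surfaces has $b_-=0$) are accurate elaborations of the paper's more terse argument.
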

\begin{proof}
Since $(M, g)$ admits a metric of positive scalar curvature, $M$ is diffeomorphic to a rational or ruled surface [23], [24]. 
From the Weitzenb\"ock formula $\Delta\alpha=\nabla^{*}\nabla\alpha-2W_{-}(\alpha, \cdot)+\frac{s}{3}\alpha$, 
we get $b_{-}=0 $ [14]. 
Then we get $M$ is diffeomorphic to $\mathbb{CP}_{2}$ and we get the result from Theorem 5. 
\end{proof}

\vspace{50pt}
\section{\large\textbf{Self-dual almost-K\"ahler four manifolds of zero type}}\label{S:Intro}
Suppose a smooth, compact riemannian manifold $(M, g)$ admits a compatible almost complex structure $J$. 
Then hermitian vector bundles are defined by 
\[\mathbb{V}_{+}=\Lambda^{0,0}\oplus \Lambda^{0,2},\]
\[\mathbb{V}_{+}=\Lambda^{1,1}.\]
Let $\mathbb{S}_{\pm}$ be spin-bundles. 
Then $\mathbb{V}_{\pm}$ can be identified with 
$V_{\pm}=\mathbb{S}_{\pm}\otimes K^{-\frac{1}{2}}$, where $K^{-1}$ is the anti-canonical line bundle. 
Then a unitary connection $A$ on $K^{-1}$ with spin connection on  
$\mathbb{S}_{\pm}$ induces a connection on $\mathbb{V}_{\pm}$. 
Using this connection, the Dirac operator
\[D_{A}:C^{\infty}(\mathbb{V}_{+})\to C^{\infty}(\mathbb{V}_{-})\]
is defined. Then the perturbed Seiberg-Witten equation is

\[D_{A}\Phi=0\]
\[iF_{A}^{+}+\sigma(\Phi)=\eta,\]
where $\rho:\mathbb{V}_{+}\to \Lambda^{+}$ is the canonical real quadratic map such that $|\sigma(\Phi)|^{2}=\frac{|\Phi|^{4}}{8}$ 
and $F_{A}^{+}$ is the self-dual part of the curvature of $F_{A}$ with respect to $g$. 

Let $(g, \eta)$ be a pair of a riemannian metric and a self-dual 2-form with respect to $g$. 
$(g, \eta)$ is called a good pair if $2\pi c_{1}^{+}\neq [\eta_{H}]$. 
Here $c_{1}^{+}$ denotes the projection of $c_{1}$ into the self dual part with respect to $g$
and $\eta_{H}$ is the self-dual harmonic part of $\eta$. 
Because of the equation $iF_{A}^{+}+\sigma(\Phi)=\eta$, for a good pair, 
a solution of the Seiberg-Witten equation is irreducible, namely $\Phi\not\equiv0$.

Then for a good pair $(g, \eta)$ with generic $\eta$, 
we can define the moduli space of solutions of Seiberg-Witten equations and its dimension is given by
 $c_{1}^{2}(K^{-1})-(2\chi+3\tau)=0$. 
We define Seiberg-Witten invariant 
as a mod 2 number of solutions. 
When $b_{+}\geq2$, the Seiberg-Witten invariant is independent of a metric and a perturbation term. 

When $b_{+}=1$, we note that the set of good pairs has two path connected component. 
When $b_{+}=1$, the Seiberg-Witten invariant depends on a component. 
We note that when $c_{1}^{2}>0$, $(g, 0)$ belong to the same component for any riemannian metric $g$. 
If not, then there exists a metric $g'$ such that $c_{1}^{+}$ with respect to $g'$ is zero. 
Then we get $c_{1}^{2}\leq 0$, which is a contradiction.

\newtheorem{Proposition}{Proposition}
\begin{Proposition}
Let $(M, \omega)$ be a compact symplectic four manifold.
Suppose either $c_{1}\cdot\omega<0, c_{1}^{2}>0$, $b_{+}=1$ or $b_{+}\geq 2$ $c_{1}^{+}\neq 0$.
Then for any riemannian metric $g$ with the scalar curvature $s$, we have 
\[\int_{M}s^{2}\geq 32\pi^{2}(c_{1}^{+})^{2}.\]
\end{Proposition}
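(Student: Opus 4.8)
The plan is to produce a solution of the unperturbed Seiberg--Witten equations for the canonical $\mathrm{Spin}^{c}$-structure and then feed it into the standard Weitzenb\"ock estimate. Fix on $M$ the $\mathrm{Spin}^{c}$-structure with $\mathbb{V}_{\pm}=\mathbb{S}_{\pm}\otimes K^{-1/2}$ described above, so that the class $c_{1}$ in the dimension formula is $c_{1}(K^{-1})=c_{1}(M,\omega)$. First I would invoke Taubes' theorem: under either hypothesis the mod $2$ Seiberg--Witten invariant of this $\mathrm{Spin}^{c}$-structure is nonzero. When $b_{+}\geq 2$ the invariant is metric- and chamber-independent, so this is immediate. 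When $b_{+}=1$, Taubes computes the invariant in the chamber of $(g,r\omega)$ for $r\gg 0$; the hypotheses $c_{1}^{2}>0$ and $c_{1}\cdot\omega<0$ are exactly what the light-cone lemma needs in order to conclude that $(g,0)$ --- which by the remark preceding the statement lies in a single chamber for every metric $g$ --- sits in that same chamber. Since $c_{1}^{+}\neq 0$ (automatic from $c_{1}^{2}>0$ when $b_{+}=1$, and assumed when $b_{+}\geq 2$), every solution is irreducible. Running a sequence of generic small perturbations $\eta\to 0$ and invoking compactness of the Seiberg--Witten moduli spaces then yields, for the given metric $g$, an irreducible solution $(A,\Phi)$ of $D_{A}\Phi=0$, $iF_{A}^{+}+\sigma(\Phi)=0$.

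Next I would apply the Lichnerowicz--Weitzenb\"ock formula
\[0=D_{A}^{*}D_{A}\Phi=\nabla_{A}^{*}\nabla_{A}\Phi+\tfrac{s}{4}\,\Phi+\tfrac{1}{2}\,F_{A}^{+}\!\cdot\Phi,\]
pair with $\Phi$, integrate over $M$, and substitute $F_{A}^{+}=i\sigma(\Phi)$ together with the Clifford identity expressing $\sigma(\Phi)\cdot\Phi$ in terms of $|\Phi|^{2}\Phi$; with the stated normalization $|\sigma(\Phi)|^{2}=\tfrac{1}{8}|\Phi|^{4}$ this produces the identity $\int_{M}\bigl(|\nabla_{A}\Phi|^{2}+\tfrac{s}{4}|\Phi|^{2}+\tfrac{1}{4}|\Phi|^{4}\bigr)\,d\mu=0$. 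Discarding $\int_{M}|\nabla_{A}\Phi|^{2}\geq 0$ and applying Cauchy--Schwarz to $\int_{M}|s|\,|\Phi|^{2}$ gives the familiar a priori bound $\int_{M}|\Phi|^{4}\,d\mu\leq\int_{M}s^{2}\,d\mu$.

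Finally I would pass to cohomology. From $iF_{A}^{+}=-\sigma(\Phi)$ one has the pointwise equality $|F_{A}^{+}|^{2}=|\sigma(\Phi)|^{2}=\tfrac{1}{8}|\Phi|^{4}$, while $\tfrac{i}{2\pi}[F_{A}]=c_{1}$ in de Rham cohomology. Writing $F_{A}=\mathcal{H}(F_{A})+d\alpha$ with $\mathcal{H}(F_{A})$ harmonic (hence representing $2\pi c_{1}$), Stokes' theorem applied to $\int_{M}d\alpha\wedge\mathcal{H}(F_{A})=0$ together with $\langle d\alpha,\mathcal{H}(F_{A})\rangle_{L^{2}}=0$ forces $(d\alpha)^{+}\perp\mathcal{H}(F_{A})^{+}$ in $L^{2}$, whence $\int_{M}|F_{A}^{+}|^{2}\,d\mu\geq\int_{M}|\mathcal{H}(F_{A})^{+}|^{2}\,d\mu=4\pi^{2}(c_{1}^{+})^{2}$. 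Chaining the three inequalities yields
\[\int_{M}s^{2}\,d\mu\;\geq\;\int_{M}|\Phi|^{4}\,d\mu\;=\;8\int_{M}|F_{A}^{+}|^{2}\,d\mu\;\geq\;32\pi^{2}(c_{1}^{+})^{2},\]
as claimed.

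The genuinely delicate step is the first one: the nonvanishing of the Seiberg--Witten invariant and, in the $b_{+}=1$ case, the chamber bookkeeping that makes $c_{1}^{2}>0$ and $c_{1}\cdot\omega<0$ do their work --- everything after that is the routine Weitzenb\"ock argument plus elementary Hodge theory. A secondary technical point is the passage from the perturbed solutions, which exist only for generic $\eta$, to an honest solution of the $\eta=0$ system; this is where the compactness of the Seiberg--Witten moduli spaces is used.
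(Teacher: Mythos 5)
Your proposal is correct and follows essentially the same route as the paper: Taubes' nonvanishing of the Seiberg--Witten invariant for the canonical $\mathrm{Spin}^{c}$-structure, the chamber bookkeeping in the $b_{+}=1$ case via $c_{1}\cdot\omega<0$ and $c_{1}^{2}>0$, the Weitzenb\"ock/Cauchy--Schwarz estimate $\int_{M}|\Phi|^{4}\,d\mu\leq\int_{M}s^{2}\,d\mu$, and the Hodge-theoretic bound $\int_{M}|F_{A}^{+}|^{2}\,d\mu\geq 4\pi^{2}(c_{1}^{+})^{2}$. The only cosmetic difference is that you obtain the last inequality by direct $L^{2}$-orthogonality of $(d\alpha)^{+}$ to the harmonic part, while the paper invokes the $L^{2}$-minimizing property of the harmonic representative; these are equivalent.
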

\begin{proof}
We follow the argument in [15], [16]. 
Let $J$ be an almost-complex structure which is compatible with $\omega$. 
Then an almost-K\"ahler metric by $g'(X, Y):=\omega(X, JY)$. 
Moreover, a homotopy class of $J$ defines a $spin^{c}$-structure. 
Taubes showed [29] that the perturbed Seiberg-Witten equation has a solution for $\eta=r\omega$ with $r$ large. 
\[D_{A}\Phi=0\]
\[iF_{A}^{+}+\sigma(\Phi)=\eta,\]
with respect to an almost-K\"ahler metric $g'$. 
Suppose $c_{1}\cdot\omega<0, c_{1}^{2}>0$, $b_{+}=1$. 
We claim $(g', t\omega)$ is a good pair for $0\leq t\leq r$, and therefore, $(g', 0)$ and $(g', r\omega)$ belongs to the same component. 
Suppose not. 
Then we have $2\pi c_{1}^{+}=t[\omega]$ for some $0\leq t\leq r$. 
Since $c_{1}\cdot\omega<0$ and $t\omega\cdot\omega\geq 0$, we get a contradiction. 
Moreover, since $c_{1}^{2}>0$, $(g, 0)$ and $(g', 0)$ belong to the same component.  
Since $c_{1}^{+}\neq 0$, we get 
for the $spin^{c}$-structure $[J]$, 
the unperturbed Seiberg-Witten equation with respect to $g$ has a non-trivial solution,
\[D_{A}\Phi=0\]
\[iF_{A}^{+}+\sigma(\Phi)=0.\]
Since there exists a non-trivial solution of Seiberg-Witten equation, we have 
\[0=D^{*}_{A}D_{A}\Phi=\nabla_{A}^{*}\nabla_{A}\Phi+\frac{s+|\Phi|^{2}}{4}\Phi.\]
By integrating, we get 
\[0=\int_{M}\left(4|\nabla\Phi|^{2}+s|\Phi|^{2}+|\Phi|^{4}\right)d\mu.\]
From this, we have 
\[\int_{M}s|\Phi|^{2}+|\Phi|^{4}d\mu\leq 0.\]
Using the Schwarz inequality, we get 
\[\int_{M}s^{2}d\mu\int_{M}|\Phi|^{4}d\mu\geq\left(\int_{M}(-s)(|\Phi|^{2})d\mu\right)^{2}\geq [\int_{M}|\Phi|^{4}]^{2}\]
and 
\[\int_{M}s^{2}d\mu\geq \int_{M}|\Phi|^{4}d\mu.\]
From $iF_{A}^{+}=-\sigma(\Phi)$ and $|\sigma(\Phi)|^{2}=\frac{|\Phi|^{4}}{8}$, we get $|iF_{A}^{+}|^{2}=\frac{|\Phi|^{4}}{8}$.
Thus, we have 
\[\int_{M}s^{2}d\mu\geq 8\int_{M}|iF_{A}^{+}|^{2}d\mu.\]
Let $\varphi$ be the harmonic representative of the de Rham class $2\pi c_{1}$.
The harmonic representative of $c_{1}$ minimizes $L^{2}$-norm of closed forms in its de Rham class.
Using 
\[c_{1}^{2}=\frac{1}{4\pi^{2}}\int_{M}\left(|iF_{A}^{+}|^{2}-|iF_{A}^{-}|^{2}\right)d\mu=\frac{1}{4\pi^{2}}\int_{M}\left(|{\varphi}^{+}|^{2}-|\varphi^{-}|^{2}\right)d\mu,\]
we have the following, 
\[\int_{M}|iF_{A}^{+}|^{2}d\mu=\frac{1}{2}\int_{M}(|iF_{A}^{+}|^{2}-|iF_{A}^{-}|^{2})d\mu+\int_{M}(|iF_{A}^{+}|^{2}+|iF_{A}^{-}|^{2})d\mu\]
\[\geq 2\pi^{2}c_{1}^{2}+\frac{1}{2}\int_{M}|\varphi|^{2}d\mu=\int_{M}|\varphi^{+}|^{2}d\mu=4\pi^{2}(c_{1}^{+})^{2}.\]

\end{proof}

\begin{Proposition}
Let $(M, \omega)$ be a compact, symplectic four manifold.
If $M$ admits a metric of zero scalar curvature $g$, then
\begin{itemize}
\item $(M, \omega)$ is rational or ruled; or
\item $(M, g)$ is an anti-self-dual Ricci flat metric. 

\end{itemize}

\end{Proposition}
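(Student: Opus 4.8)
The plan is to assume $(M,\omega)$ is not rational or ruled and to show that the given metric $g$ must then be anti-self-dual and Ricci-flat; the tools are the Seiberg--Witten estimates already used for Proposition 1 together with the structure theory of symplectic four-manifolds. First I would record that, since $(M,\omega)$ is not rational or ruled, the results of Taubes, Liu and Li give that the symplectic Kodaira dimension $\kappa^{s}(M,\omega)$ is nonnegative; passing to the symplectic minimal model, the canonical class obeys $K_{\min}\cdot[\omega_{\min}]\geq 0$, so $c_{1}\cdot[\omega]\leq 0$, with strict inequality unless $M$ is minimal with $\kappa^{s}=0$, in which case $c_{1}$ is a torsion class and $2\chi+3\tau=0$.

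The next step is to eliminate $\kappa^{s}\geq 1$. In this range $M$ carries a monopole class $\mathfrak{a}$ --- namely $K_{\min}$, pulled back and symmetrized over the exceptional classes when $M$ is non-minimal --- which lies in the forward cone, is non-torsion, and has $\mathfrak{a}^{2}\geq 0$; hence $(\mathfrak{a}^{+})^{2}>0$ for \emph{every} metric on $M$. The Seiberg--Witten inequality $\int_{M}s^{2}\,d\mu\geq 32\pi^{2}(\mathfrak{a}^{+})^{2}$, obtained exactly as in the proof of Proposition 1 from Taubes' existence theorem and the Weitzenb\"ock formula, then gives $\int_{M}s^{2}\,d\mu>0$, contradicting $s\equiv 0$. (When $b_{+}=1$ this is literally Proposition 1 in the case $c_{1}\cdot[\omega]<0$, $c_{1}^{2}>0$; in the residual low cases --- $b_{+}=1$ with $\kappa^{s}=1$ and $c_{1}^{2}\leq 0$, or non-minimal surfaces of general type --- I would instead quote LeBrun's computation that a symplectic four-manifold of Kodaira dimension $\geq 1$ has negative Yamabe invariant, which again forbids a scalar-flat metric.) Therefore $\kappa^{s}(M,\omega)=0$: $c_{1}$ is torsion and $2\chi+3\tau=0$.

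It remains to treat $\kappa^{s}=0$ and show $g$ is anti-self-dual and Ricci-flat. By the structure theory of symplectic Calabi--Yau surfaces (Taubes, Li), $M$ is either simply connected --- then spin, $b_{+}>1$, and, by Rokhlin's theorem, $\tau$ a negative multiple of $16$ --- or covered by such a manifold (the Enriques case, $\pi_{1}=\mathbb{Z}/2$), or aspherical with virtually nilpotent fundamental group. In the first case the Dirac operator has nonzero index $-\tau/8$, hence a nonzero harmonic spinor; since $s\equiv 0$, the Lichnerowicz formula $D^{2}=\nabla^{*}\nabla$ forces every harmonic spinor to be parallel, the holonomy of $g$ reduces to $SU(2)$, and $g$ is hyperk\"ahler --- in particular Ricci-flat K\"ahler, so $W_{+}=0$ and $g$ is anti-self-dual and Ricci-flat; the Enriques case follows by pulling $g$ back to the K3 double cover, where this applies, and descending. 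In the aspherical case $M$ is enlargeable, so a scalar-flat metric on it is Ricci-flat (a scalar-flat metric that is not Ricci-flat can be deformed to one of positive scalar curvature), hence flat by the Cheeger--Gromoll splitting theorem; thus either $\pi_{1}$ is virtually abelian, $M$ is flat, and $g$ is trivially anti-self-dual and Ricci-flat, or $\pi_{1}$ is not virtually abelian and $M$ carries no scalar-flat metric, so the hypothesis is vacuous. This exhausts the cases.

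The step I expect to be the main obstacle is the elimination of $\kappa^{s}\geq 1$: one must check, uniformly over non-minimal manifolds and over the chamber structure of Seiberg--Witten theory in the case $b_{+}=1$, that there is genuinely a monopole class (or a class in the convex hull of monopole classes) whose self-dual part is nonzero for \emph{every} metric --- this is precisely where LeBrun's sharpenings of the argument behind Proposition 1 are needed. By contrast, the conceptual core of the ``zero type'' conclusion is the short parallel-spinor argument, which pins the K3 case down to a hyperk\"ahler, hence anti-self-dual Ricci-flat, metric.
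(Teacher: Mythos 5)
Your route has a genuine gap at its core: the case $\kappa^{s}=0$ is handled by appealing to a ``structure theory of symplectic Calabi--Yau surfaces'' asserting that $M$ is simply connected, or an Enriques-type $\mathbb{Z}/2$-quotient, or aspherical with virtually nilpotent fundamental group. No such theorem is available: what Taubes, Li and Bauer actually prove is a constraint on Betti numbers (equivalently, the rational homology type of K3, the Enriques surface, or a torus bundle); simple connectivity in the homology-K3 case and asphericity in the torus-bundle case are precisely the open parts of the symplectic Calabi--Yau conjecture. Your argument uses the unproven parts essentially: the Dirac-index/parallel-spinor step needs $M$ spin (which you deduce from simple connectivity), and the enlargeability/Cheeger--Gromoll step needs asphericity. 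Two further steps would also fail as written. First, your conclusion of the second paragraph, ``$\kappa^{s}=0$, hence $c_{1}$ is torsion and $2\chi+3\tau=0$,'' conflates $\kappa^{s}=0$ with minimality; non-minimal manifolds with $\kappa^{s}=0$ (blow-ups of K3, Enriques or $T^{4}$) have non-torsion $c_{1}$ and $c_{1}^{2}<0$ and are covered by neither your second nor your third paragraph. Second, in the residual $b_{+}=1$, $\kappa^{s}\geq 1$ cases you propose to quote that such manifolds have negative Yamabe invariant; LeBrun's computation of Yamabe invariants is for compact complex surfaces, and the symplectic analogue you need (which would in particular exclude scalar-flat metrics) is not a quotable theorem --- it is essentially the kind of statement being proved here.

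The paper's proof is much shorter and avoids all of this because its second alternative is a statement about the metric, not about the Kodaira dimension of $(M,\omega)$. One invokes the standard trichotomy: if $M$ carries a metric of nonnegative, not identically zero scalar curvature, then it carries one of positive constant scalar curvature and $(M,\omega)$ is rational or ruled by Liu and Ohta--Ono; otherwise every scalar-flat metric, in particular $g$, is Ricci-flat. Then the Gauss--Bonnet/signature identity with $s=0$ and $ric_{0}=0$ gives $c_{1}^{2}=2\chi+3\tau=\frac{1}{4\pi^{2}}\int_{M}2|W_{+}|^{2}d\mu\geq 0$; the possibility $c_{1}^{2}>0$ is excluded by the Seiberg--Witten estimate of Proposition 1 (or by LeBrun's del Pezzo lemma), and $c_{1}^{2}=0$ forces $W_{+}\equiv 0$, i.e.\ $g$ is anti-self-dual and Ricci-flat. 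Note that a Ricci-flat metric on, say, a minimal $\kappa^{s}=1$ manifold with $c_{1}^{2}=0$ is simply absorbed into the second alternative, so there is no need to rule out $\kappa^{s}\geq 1$ at all --- the place where your proposal is forced into the heaviest and partly unavailable machinery. Your parallel-spinor observation in the homology-K3 case is correct and attractive, but as a proof of the proposition the case analysis it sits in cannot be completed with known results.
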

\begin{proof}
Suppose $M$ admits a metric with nonnegative and not identically zero scalar curvature. 
Then $M$ admits a metric of constant positive scalar curvature [7].
Then $(M, \omega)$ is rational or ruled [23], [24]. 
We note that Taubes showed that $b_{+}=1$ if $c_{1}\cdot\omega>0$ [30]. Moreover, Liu showed that $M$ is diffeomorphic
to a rational or ruled surface if $b_{+}=1$ and $c_{1}\cdot\omega>0$ [23]. 

Suppose $M$ does not admit any metric with nonnegative but not identically zero scalar curvature. 
Then $g$ is Ricci-flat [7]. 
From the following formula, 
\[c_{1}^{2}=2\chi+3\tau=\frac{1}{4\pi^{2}}\int_{M}\left(\frac{s^{2}}{24}+2|W_{+}|^{2}-\frac{|ric_{0}|^{2}}{2}\right)d\mu, \]
we have $c_{1}^{2}\geq 0$. 
If $c_{1}^{2}=0$, then we get $(M, g)$ is anti-self-dual Ricci-flat. 
Suppose $c_{1}^{2}>0$. This in particular implies $c_{1}^{+}\neq 0$. 
Then the Seiberg-Witten invariant is well-defined even when $b_{+}=1$.
Suppose either $c_{1}\cdot\omega<0, b_{+}=1$ or $b_{+}\geq 2$. Then, from Proposition 1, we have
\[\int_{M}s^{2}\geq 32\pi^{2}(c_{1}^{+})^{2}.\]
Since $s=0$, we get $c_{1}^{+}=0$ and therefore, $c_{1}^{2}\leq 0$, which is a contradiction.
Thus, $c_{1}\cdot\omega<0, c_{1}^{2}>0, b_{+}=1$ or $b_{+}\geq 2, c_{1}^{2}>0$ does not occur. 
On the other hand, if $b_{+}=1$ and $c_{1}\cdot\omega=0$, then $c_{1}^{2}\leq 0$. 
Thus, we get $c_{1}^{2}=0$. 
Then we get $(M, g)$ is an anti-self-dual Ricci flat metric. 

We can also prove this result using the following result by LeBrun [18]. 
If $M$ admits a metric with nonnegative but not identically zero scalar curvature, then $(M, \omega)$ is rational or ruled [7], [23], [24]. 
Suppose $M$ does not admit any metric with nonnegative but not identically zero scalar curvature.  
Then in particular, $M$ is not diffeomorphic to a del Pezzo surface
and any scalar flat metric is Ricci-flat [7]. 
Since 
\[c_{1}^{2}=\frac{1}{4\pi^{2}}\int_{M}\left(\frac{s^{2}}{24}+2|W_{+}|^{2}-\frac{|ric_{0}|^{2}}{2}\right)d\mu, \]
we get $c_{1}^{2}\geq 0$. 
Since $M$ is not diffeomorphic to a del Pezzo surface, we get $c_{1}^{2}=0$ by Lemma 1 and therefore, $g$ is anti-self-dual.

\end{proof}

\newtheorem{Lemma}{Lemma}
\begin{Lemma}
(LeBrun)
Let $(M, g, \omega)$ be a compact, symplectic four manifold. 
Suppose $c_{1}^{2}>0$ and $M$ admits a metric of nonnegative scalar curvature. 
Then $M$ is diffeomorphic to a del Pezzo surface. 
\end{Lemma}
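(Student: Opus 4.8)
The plan is to run the standard dichotomy for the scalar curvature of a closed four-manifold. If $M$ admits a metric whose scalar curvature is nonnegative and not identically zero, then $M$ admits a metric of positive scalar curvature [7], so $(M,\omega)$ is symplectically rational or ruled [23], [24]. It then remains to verify the purely topological fact that, among the underlying smooth oriented four-manifolds of rational or ruled surfaces, precisely those diffeomorphic to a del Pezzo surface satisfy $c_{1}^{2}=2\chi+3\tau>0$. Indeed, a ruled surface over a curve of genus $h$ has $c_{1}^{2}=8(1-h)$, and each blow-up decreases $c_{1}^{2}$ by one; hence $c_{1}^{2}>0$ forces $h=0$ together with only finitely many blow-ups, leaving exactly $\mathbb{CP}_{2}$, $S^{2}\times S^{2}$, and $\mathbb{CP}_{2}\#k\overline{\mathbb{CP}_{2}}$ with $1\le k\le 8$, which are precisely the diffeomorphism types realized by del Pezzo surfaces.

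It remains to exclude the other alternative, that every metric of nonnegative scalar curvature on $M$ --- in particular the given $g$ --- is scalar flat and hence Ricci-flat [7]. Here I would reproduce the Seiberg-Witten argument from the proof of Proposition 2. Since $c_{1}^{2}>0$ we have $(c_{1}^{+})^{2}\ge c_{1}^{2}>0$, so $c_{1}^{+}\neq 0$; thus the Seiberg-Witten invariant for the $spin^{c}$-structure determined by $\omega$ is well defined even for $b_{+}=1$, and is nonzero by Taubes [29], [30]. If $b_{+}\ge 2$, or if $b_{+}=1$ and $c_{1}\cdot\omega<0$, then Proposition 1 gives $\int_{M}s^{2}\,d\mu\ge 32\pi^{2}(c_{1}^{+})^{2}>0$, contradicting $s\equiv 0$. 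If $b_{+}=1$ and $c_{1}\cdot\omega=0$, then $c_{1}$ is orthogonal to the class $[\omega]$ of positive self-intersection, so $c_{1}^{2}\le 0$ by the Hodge index theorem, again a contradiction. Finally, if $b_{+}=1$ and $c_{1}\cdot\omega>0$, then $M$ is diffeomorphic to a rational or ruled surface [23], which carries a metric of positive scalar curvature, contrary to the standing assumption of this case. Hence this alternative cannot occur, and $M$ is diffeomorphic to a del Pezzo surface.

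I expect the Ricci-flat alternative to be the only genuine obstacle: ruling out $c_{1}^{2}>0$ there is exactly the step that forces the use of Seiberg-Witten theory through Proposition 1, together with the chamber analysis at $b_{+}=1$ and Liu's diffeomorphism classification. The positive-scalar-curvature alternative, by contrast, is handled entirely by the structure theory of symplectic rational and ruled four-manifolds and the elementary behaviour of $c_{1}^{2}$ under blow-up.
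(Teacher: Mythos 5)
The paper does not actually prove this lemma: it is stated as LeBrun's result, cited from [18], and used only in the alternative proof of Proposition 2, so there is no internal argument to compare against. Your proposal is correct and is a faithful reconstruction using exactly the circle of ideas the paper deploys elsewhere: the dichotomy from [7] (either a positive-scalar-curvature metric exists, or every nonnegative-scalar-curvature metric is scalar-flat, hence Ricci-flat), the Liu/Ohta--Ono classification [23], [24], and the Seiberg--Witten estimate of Proposition 1. Your first branch rests on the elementary facts that $c_1^2=2\chi+3\tau$ drops by one under blow-up and equals $8(1-h)$ for an $S^2$-bundle over a genus-$h$ curve, so a rational or ruled four-manifold with $c_1^2>0$ has the diffeomorphism type of a del Pezzo surface; your second branch is essentially the case analysis in the paper's first proof of Proposition 2 ($b_+\geq 2$, or $b_+=1$ with $c_1\cdot\omega<0$, handled by Proposition 1; $c_1\cdot\omega=0$ by the Hodge-index/light-cone argument; $c_1\cdot\omega>0$ by Liu), and your appeals to Proposition 1 respect its hypotheses since $c_1^2>0$ forces $(c_1^+)^2\geq c_1^2>0$ for every metric, so the invariant is defined and the estimate contradicts $s\equiv 0$. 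This is also the same Seiberg--Witten route LeBrun follows in [18], so nothing essential is gained or lost; the only cosmetic redundancy is that in the subcase $b_+=1$, $c_1\cdot\omega>0$ you could conclude del Pezzo directly from the first branch instead of deriving a contradiction.
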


\begin{Lemma}
Let $(M, g, \omega)$ be a compact anti-self-dual almost-K\"ahler manifold with zero type. 
Then $(M, g, \omega)$ is anti-self-dual K\"ahler. 

\end{Lemma}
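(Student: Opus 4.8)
The plan is to insert the fundamental form $\omega$ into the Weitzenb\"ock formula for self-dual $2$-forms, use anti-self-duality to kill the Weyl term, and close the argument using the sign of $\int_{M}s\,d\mu$ forced by the zero-type hypothesis; Proposition 2 is not actually needed.

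First I would record that on an almost-K\"ahler four-manifold $\omega$ is a \emph{self-dual harmonic} $2$-form: by definition $d\omega=0$, and since $\omega\in\Lambda^{+}$ we have $\ast\omega=\omega$, whence $d^{\ast}\omega=-\ast d\ast\omega=-\ast d\omega=0$; thus $\Delta\omega=0$. The Weitzenb\"ock formula for a self-dual $2$-form is
\[\Delta\omega=\nabla^{\ast}\nabla\omega-2W_{+}(\omega,\cdot)+\frac{s}{3}\omega,\]
the mirror, under orientation reversal, of the formula for anti-self-dual forms recalled earlier in the paper. Since $g$ is anti-self-dual, $W_{+}=0$, so $\nabla^{\ast}\nabla\omega=-\frac{s}{3}\omega$. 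Pairing with $\omega$ in $L^{2}$, integrating by parts on the compact manifold, and using $|\omega|^{2}=2$ yields
\[\int_{M}|\nabla\omega|^{2}\,d\mu=-\frac{2}{3}\int_{M}s\,d\mu.\]

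Next I would invoke the zero-type hypothesis. That $[g]$ is of zero type means the Yamabe constant of $[g]$ vanishes; since this constant is the infimum of $\mathrm{Vol}(g')^{-1/2}\int_{M}s_{g'}\,d\mu_{g'}$ over $g'\in[g]$, every metric in the conformal class — in particular $g$ itself — satisfies $\int_{M}s\,d\mu\geq 0$. Substituting this into the displayed identity forces $\int_{M}|\nabla\omega|^{2}\,d\mu\leq 0$, hence $\nabla\omega\equiv 0$ (and, incidentally, $s\equiv 0$). Therefore $J$ is parallel, in particular integrable, so $(M,g,\omega)$ is K\"ahler; being anti-self-dual by hypothesis, it is anti-self-dual K\"ahler.

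The argument is short, and the only point demanding care is the passage from ``zero type'' to $\int_{M}s\,d\mu\geq 0$, i.e.\ the identification of the type of a conformal class with the sign of its Yamabe constant, together with the observation that $\omega$, being closed and self-dual, is automatically coclosed and hence harmonic. (An alternative, heavier route would be to pass to a scalar-flat representative $\tilde g\in[g]$ — still anti-self-dual, since $W_{+}$ is a conformal invariant — apply Proposition 2 to conclude that either $(M,\omega)$ is rational or ruled or $\tilde g$ is anti-self-dual Ricci-flat, and then show $g$ must agree with the Ricci-flat representative up to scale before invoking the fact that a compact Ricci-flat almost-K\"ahler four-manifold is K\"ahler; but this detour is unnecessary.)
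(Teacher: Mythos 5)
Your proof is correct, and it handles the zero-type hypothesis by a genuinely different (and slightly shorter) route than the paper, although the central tool is the same Weitzenb\"ock formula for self-dual $2$-forms. The paper passes to the scalar-flat representative $g'\in[g]$, observes that $\omega$ is still self-dual harmonic for $g'$, applies the Weitzenb\"ock formula pointwise (with $W_{+}=0$ and $s_{g'}=0$) to get $\nabla^{g'}\omega=0$, and then needs an extra step to return to $g$: since $|\omega|_{g'}$ is constant (being parallel) while $|\omega|_{g}\equiv\sqrt{2}$, the conformal factor must be constant, so $g'=cg$ and $\omega$ is parallel for $g$ as well. You instead stay with the original metric, integrate the Weitzenb\"ock identity to obtain $\int_{M}|\nabla\omega|^{2}\,d\mu=-\frac{2}{3}\int_{M}s\,d\mu$, and use that zero type forces $\int_{M}s_{g}\,d\mu_{g}\geq 0$ for every metric in the conformal class, which kills both terms at once and bypasses the conformal-factor argument. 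The step you rightly single out is indeed the only delicate one, and it is correct: it follows from identifying the type with the sign of the Yamabe constant, or more elementarily from the conformal transformation law, since writing $g=u^{2}g'$ with $s_{g'}=0$ gives $\int_{M}s_{g}\,d\mu_{g}=6\int_{M}|\nabla u|_{g'}^{2}\,d\mu_{g'}\geq 0$. One minor remark: the integral identity by itself yields only $\int_{M}s\,d\mu=0$; the pointwise vanishing $s\equiv 0$ then follows from $0=\nabla^{*}\nabla\omega=-\frac{s}{3}\omega$ once $\nabla\omega=0$ (or from the fact that an anti-self-dual K\"ahler surface is scalar-flat), but nothing in the lemma requires it. Both arguments reach the same conclusion; yours trades the paper's pointwise, conformally explicit argument for a cleaner integrated one at the cost of invoking the Yamabe-constant characterization of type.
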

\begin{proof}
On a compact almost-K\"ahler four manifold, we note that the symplectic form is a self-dual harmonic 2-form with constant length $\sqrt{2}$. 
Let $g'$ be a metric with zero scalar curvature which is conformal to $g$. 
Then $g'$ is anti-self-dual with zero scalar curvature. 
Since $g'$ is conformal to $g$ and self-dual harmonic 2-form is conformally invariant, 
$\omega$ is a self-dual harmonic 2-form with respect to $g'$. 
From the Weitzenb\"ock formula, 
\[\Delta\omega=\nabla^{*}\nabla\omega-2W_{+}(\omega, \cdot)+\frac{s}{3}\omega,\]
$\omega$ is parallel with respect to $g'$. 
Then $(M, g', \frac{\omega}{c})$ is K\"ahler, where $||\omega||_{g'}=\sqrt{2}c$ for a positive constant $c$. 
Thus, we get $c||\omega||_{g}=||\omega||_{g'}$. Since $g$ is conformal to $g'$, we get $cg=g'$. 
Then $\omega$ is parallel with respect to $g$. 
Thus, $(M, g, \omega)$ is anti-self-dual K\"ahler. 

\end{proof}

We give one of the proofs of Tanno's Theorem [28] when a manifold is compact.

\begin{Theorem}
(Tanno)
Let $(M, g, \omega)$ be a compact conformally flat K\"ahler surface. 
Then $(M, g)$ is either locally flat or locally a product space of 2-dimensional riemannian manifolds with constant 
curvature $K$ and $-K$.

\end{Theorem}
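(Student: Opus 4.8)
The plan is to reduce the problem to two facts---that the scalar curvature vanishes identically, and that the Ricci tensor is parallel---from which the dichotomy in the statement drops out of the de Rham decomposition.

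First I would observe that conformal flatness means $W_{+}=W_{-}=0$. On a K\"ahler surface $W_{+}$ is algebraically determined by $s$: the form $\omega$ is an eigenvector of $W_{+}$ with eigenvalue $s/6$ and $W_{+}$ acts as $-s/12$ on $[[\Lambda^{2,0}]]$, so $|W_{+}|^{2}=s^{2}/24$ pointwise (the pointwise refinement of the identity used in the proof of Theorem 1). Hence $W_{+}=0$ forces $s\equiv 0$, and the curvature operator $\mathfrak{R}$ collapses to its off-diagonal $ric_{0}$-block, i.e. the Riemann tensor is determined by the trace-free Ricci tensor alone.

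Next I would show $\nabla\mathrm{Ric}=0$. Since $s=0$, the Ricci form $\rho(\cdot,\cdot)=\mathrm{Ric}(J\cdot,\cdot)$ is a real $(1,1)$-form whose $\omega$-component (a multiple of $s$) vanishes, so $\rho\in[\Lambda_{0}^{1,1}]=\Lambda^{-}$; as $\rho$ is also closed (it represents $2\pi c_{1}$), being anti-self-dual it is coclosed, hence harmonic. Feeding $\rho$ into the Weitzenb\"ock formula for anti-self-dual $2$-forms, $\Delta\rho=\nabla^{*}\nabla\rho-2W_{-}(\rho,\cdot)+\tfrac{s}{3}\rho$, and using $W_{-}=0$, $s=0$, $\Delta\rho=0$, I obtain $\nabla^{*}\nabla\rho=0$; integrating over the compact manifold gives $\nabla\rho=0$, hence $\nabla\mathrm{Ric}=0$. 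Together with $W=0$ and $s$ constant this shows $\nabla\mathfrak{R}=0$, so $(M,g)$ is locally symmetric.

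Finally, since $\mathrm{Ric}$ is parallel its eigenvalues are constant and its eigenspaces are parallel distributions (integrable, with integrable orthogonal complements); since $\mathrm{Ric}$ is $J$-invariant these eigenspaces are $J$-invariant, so in real dimension four either there is a single eigenvalue of multiplicity $4$ or two eigenvalues $\mu\neq\nu$ each of multiplicity $2$. In the first case $\mathrm{Ric}=\tfrac{s}{4}g=0$, and since $W=0$ as well $(M,g)$ is flat. In the second case $2\mu+2\nu=s=0$, so $\nu=-\mu\neq 0$, and the de Rham theorem applied to the two parallel distributions gives a local splitting $(M,g)\cong\Sigma_{1}\times\Sigma_{2}$ into surfaces; for a Riemannian product of surfaces the Gauss curvature of each factor equals the corresponding Ricci eigenvalue, so $\Sigma_{1}$ has constant curvature $\mu$ and $\Sigma_{2}$ has constant curvature $-\mu$. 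The step needing the most care is the middle one: pinning down the pointwise identity $|W_{+}|^{2}=s^{2}/24$ and the exact form of the Weitzenb\"ock operator with consistent conventions, and checking that $\rho$ is anti-self-dual precisely when $s=0$; once $\nabla\mathrm{Ric}=0$ is in hand the remainder is the standard de Rham argument.
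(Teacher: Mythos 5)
Your proposal is correct, and its core engine is the same as the paper's: scalar curvature vanishes for an anti-self-dual K\"ahler surface, the Ricci form $\rho$ is then an anti-self-dual closed (hence harmonic) $2$-form, and the Weitzenb\"ock formula with $W_{-}=0$, $s=0$ forces $\nabla\rho=0$, i.e.\ $\nabla\mathrm{Ric}=0$. Where you genuinely diverge is in how the local product structure is produced. The paper first invokes topology: since $b_{+}\geq 1$ and $\tau=0$ (conformal flatness), one has $b_{-}\geq 1$, so there is a nonzero harmonic anti-self-dual form $\alpha$, which the same Weitzenb\"ock argument makes parallel; the de Rham splitting comes from this parallel $\alpha$ (orthogonal to $\omega$), and the constancy of the factor curvatures is then checked by the pointwise computation $R_{1313}=R_{1414}=0$, $R_{1212}+R_{1313}+R_{1414}$ constant, $R_{1212}=-R_{3434}$. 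You instead extract the splitting from the eigenspace decomposition of the parallel, $J$-invariant Ricci tensor: either $\mathrm{Ric}=0$, which with $W=0$, $s=0$ gives flatness outright, or there are two parallel $J$-invariant $2$-plane eigendistributions with eigenvalues $\mu$ and $-\mu$, and de Rham plus the identification of Ricci eigenvalues with the Gauss curvatures of the factors finishes the argument in one stroke. Your route avoids the topological input $b_{-}\geq 1$ and the explicit curvature-component computation, and it cleanly isolates the locally flat case, which the paper's proof leaves implicit; the paper's route has the mild advantage of producing a canonical parallel anti-self-dual form independently of whether $\mathrm{Ric}$ vanishes. Your preliminary step (the pointwise spectrum $\{s/6,-s/12,-s/12\}$ of $W_{+}$ on a K\"ahler surface giving $|W_{+}|^{2}=s^{2}/24$, hence $s\equiv 0$) is a correct sharpening of the fact the paper simply quotes, and the intermediate observation that $(M,g)$ is locally symmetric, while true, is not needed for the conclusion.
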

\begin{proof}
We note that an anti-self-dual K\"ahler surface has zero scalar curvature. 
Since $b_{+}\geq 1$ and $\tau=0$, we have $b_{-}\geq 1$. 
From the Weitzenb\"ock formula for an anti-self-dual 2-form, 
\[\Delta\alpha=\nabla^{*}\nabla\alpha-2W_{-}(\alpha, \cdot)+\frac{s}{3}\alpha,\]
we get $\nabla\alpha=0$. Since $\langle\omega, \alpha\rangle=0$, we get $(M, g)$ is locally a product space
by de Rham decomposition Theorem. Since $s=0$, the Ricci form $\rho(X, Y)=Ric(JX, Y)$ is an anti-self-dual 2-form. 
Since $\rho$ is closed, $\rho$ is a harmonic 2-form. 
Then by the Weitzenb\"ock formula, we get $\nabla\rho=0$ since $s=W_{-}=0$. 
Then, $\nabla Ric=0$. Let $\{e_{1}, e_{2}, e_{3}, e_{4}\}$ be an orthonormal basis such that $\{e_{1}, e_{2}\}$, $\{e_{3}, e_{4}\}$ are basis of each 2-dimensional manifold. 
Since $g$ is locally a product metric, we have $R_{1313}=R_{1414}=0$. 
Since $R_{1212}+R_{1313}+R_{1414}$ is constant, $R_{1212}$ is constant. 
Similarly, $R_{3434}$ is constant. 
Since $s=0$, we get $R_{1212}=-R_{3434}$. 

\end{proof}

\begin{Theorem}
Let $(M, g, \omega)$ be a compact self-dual almost-K\"ahler four manifold with zero type. 
Then
\begin{itemize}
\item $(M, g, \omega)$ is locally flat K\"ahler; or
\item $(M, g, \omega)$ is K\"ahler and $(M, g)$ is locally a product space of 2-dimensional riemannian manifolds of constant curvature $K$ and $-K$. 
\end{itemize}
\end{Theorem}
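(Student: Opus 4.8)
The idea is to use the zero-type hypothesis to produce a conformally flat metric in $[g]$ and then to invoke Lemma 2 together with Theorem 8. Since $[g]$ is of zero type, pick a conformal representative $g'\in[g]$ with $s_{g'}\equiv 0$. Self-duality ($W_-=0$) depends only on the conformal class and the orientation, so $g'$ is again self-dual. As $(M,\omega)$ is a compact symplectic four manifold admitting the scalar-flat metric $g'$, Proposition 2 applies and yields two cases: either $(M,\omega)$ is rational or ruled, or $(M,g')$ is anti-self-dual and Ricci-flat.

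In the second case $g'$ is both self-dual and anti-self-dual, so its Weyl tensor $W=W_++W_-$ vanishes; being also Ricci-flat (hence scalar-flat), $g'$ has vanishing curvature tensor, i.e. $g'$ is flat, so $g$ is conformally flat. In the first case the signature formula $\int_M(|W_+|^2-|W_-|^2)\,d\mu=12\pi^2\tau$ gives, for the self-dual metric $g$, that $\tau=\frac{1}{12\pi^2}\int_M|W_+|^2\,d\mu\ge 0$. If $\tau>0$, then, since among the underlying smooth oriented four manifolds of rational or ruled surfaces only the one diffeomorphic to $\mathbb{CP}_2$ has positive signature, $M$ is diffeomorphic to $\mathbb{CP}_2$; but then Theorem 5 forces $g$ to be a constant multiple of the Fubini-Study metric, which has positive scalar curvature, contradicting that $[g]$ is of zero type. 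Hence $\tau=0$, so $\int_M|W_+|^2\,d\mu=0$ and $W_+\equiv 0$; being also self-dual, $g$ is conformally flat. Thus in both cases $g$ is conformally flat.

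In particular $W_-=0$, so $(M,g,\omega)$ is a compact anti-self-dual almost-K\"ahler four manifold of zero type. By Lemma 2 it is anti-self-dual K\"ahler, so $\omega$ is parallel for $g$ and $g$ is a K\"ahler metric; hence $(M,g,\omega)$ is a compact conformally flat K\"ahler surface. Theorem 8 (Tanno) then gives that $(M,g)$ is either locally flat, which is the first alternative (with $(M,g,\omega)$ locally flat K\"ahler), or locally a product of two-dimensional Riemannian manifolds of constant curvature $K$ and $-K$, which is the second alternative (with $(M,g,\omega)$ K\"ahler).

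The step I expect to be the main obstacle is the rational-or-ruled case: one must combine the topological fact that $\mathbb{CP}_2$ is the unique rational or ruled manifold of positive signature with the rigidity statement of Theorem 5 in order to eliminate the possibility $\tau>0$, after which the conformal invariance of $W_-$ reduces everything to Lemma 2 and Theorem 8. The remaining manipulations---conformal invariance of self-duality and of the Weyl tensor, the curvature decomposition, and the signature formula---are routine.
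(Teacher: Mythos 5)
Your proof is correct and follows essentially the same route as the paper's: Proposition 2 to split into the rational-or-ruled and anti-self-dual Ricci-flat cases, reduction in both cases to a conformally flat self-dual almost-K\"ahler metric, then Lemma 2 followed by Tanno's theorem (which is Theorem 6 in the paper, not Theorem 8). The only cosmetic difference is that you exclude the $\mathbb{CP}_{2}$ (i.e.\ $\tau>0$) possibility by invoking Theorem 5 together with the positivity of the Fubini--Study scalar curvature, whereas the paper rules it out directly from $2\chi-3\tau\leq 0$ for a self-dual scalar-flat metric against $2\chi-3\tau=3$ on $\mathbb{CP}_{2}$.
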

\begin{proof}
We use Proposition 2. 
Note that from the following formula, 
\[2\chi-3\tau=\int_{M}\left( \frac{s^{2}}{24}+2|W_{-}|^{2}-\frac{|ric_{0}|^{2}}{2}\right)d\mu\leq 0, \]
we have $2\chi-3\tau\leq 0$ for a self-dual metric with zero scalar curvature.
On the other hand, for $\mathbb{CP}_{2}$, we have $2\chi-3\tau=2(2+b_{+})-3b_{+}=3$. 
Thus, a manifold which is diffeomorphic to $\mathbb{CP}_{2}$ does not admit a self-dual metric with zero scalar curvature. 
Any self-dual metric on a rational or ruled surface is anti-self-dual if it is not diffeomorphic to $\mathbb{CP}_{2}$.
Then we can assume that $(M, g, \omega)$ is conformally flat almost-K\"ahler with zero type. 
By Lemma 2, $(M, g, \omega)$ is self-dual and anti-self-dual K\"ahler. Then we get the result by Theorem 6. 

Suppose there is an anti-self-dual Ricci-flat $g'$
which is conformal to $g$. 
Then we get $(M, g, \omega)$ is anti-self-dual almost-K\"ahler with zero type. 
By Lemma 2, $(M, g, \omega)$ is self-dual and anti-self-dual K\"ahler.
On the other hand, since $c_{1}^{2}=0$, $g$ is Ricci-flat. 
Thus, $g$ is locally a flat K\"ahler metric. 
We note that an anti-self-dual Ricci flat metric on $K3$ surface is not self-dual since $\tau\neq 0$ on $K3$ surface. 
\end{proof}

We consider compact self-dual almost-K\"ahler four manifolds with Kodaira dimension zero. 
\begin{Theorem}
(Li [22]) Let $(M, \omega)$ be a compact minimal symplectic four manifold with Kodaira dimension zero. 
Then the Euler number is 0, 12, or 24 and the signature is -16, -8, or 0. 
Moreover, $b_{+}\leq 3$, $b_{-}\leq 19$ and $b_{1}\leq 4$.

\end{Theorem}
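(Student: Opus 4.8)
The plan is to read off the numerical content of the hypothesis $\kappa(M,\omega)=0$ and then bound the Betti numbers by gauge theory. By definition of the symplectic Kodaira dimension, $\kappa(M,\omega)=0$ means the symplectic canonical class $K_{\omega}=-c_{1}(M,\omega)$ satisfies $K_{\omega}\cdot[\omega]=0$ and $K_{\omega}^{2}=0$. The second identity is $c_{1}^{2}=2\chi+3\tau=0$; combining it with $\chi=2-2b_{1}+b_{+}+b_{-}$ and $\tau=b_{+}-b_{-}$ gives $b_{-}=4-4b_{1}+5b_{+}$, $\chi=6(1-b_{1}+b_{+})$ and $\tau=-4(1-b_{1}+b_{+})$, while integrality of the spin$^{c}$ Dirac index $\tfrac14(\chi+\tau)=\tfrac12(1-b_{1}+b_{+})$ forces $1-b_{1}+b_{+}$ to be even. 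Hence the whole theorem reduces to the single inequality $b_{+}\le 3$: from it and $b_{-}\ge 0$ one gets $4b_{1}\le 4+5b_{+}\le 19$, so $b_{1}\le 4$, $b_{-}=4-4b_{1}+5b_{+}\le 19$, and $0\le 1-b_{1}+b_{+}\le 4$; evenness then leaves $1-b_{1}+b_{+}\in\{0,2,4\}$, i.e.\ $\chi\in\{0,12,24\}$ and $\tau\in\{0,-8,-16\}$.

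To get at $b_{+}$ I would use Seiberg--Witten theory via Taubes, in the same spirit as the proof of Proposition~1. If $b_{+}=1$ there is nothing to prove for the bound; moreover $[\omega]^{\perp}$ is then negative definite in $H^{2}(M;\mathbb{R})$, and $K_{\omega}\cdot[\omega]=0$ together with $K_{\omega}^{2}=0$ forces $K_{\omega}=0$ in real cohomology, so $c_{1}$ is torsion, and the classification of symplectic $4$-manifolds with torsion canonical class (Bauer, Li) pins the rational homology down to that of an Enriques or a bielliptic surface. For $b_{+}\ge 2$, Taubes' nonvanishing theorem shows the spin$^{c}$ structure induced by $J$ has nontrivial invariant, so $-K_{\omega}$ and (by charge conjugation) $K_{\omega}$ are basic classes, and by Taubes' work minimality of $(M,\omega)$ is equivalent to smooth minimality, so $M$ has no smoothly embedded $(-1)$-sphere. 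For any basic class $\alpha$ the expected dimension $\tfrac14\bigl(\alpha^{2}-(2\chi+3\tau)\bigr)=\tfrac14\alpha^{2}$ is nonnegative, so $\alpha^{2}\ge 0$, while Taubes' curvature estimate gives $|\alpha\cdot[\omega]|\le K_{\omega}\cdot[\omega]=0$; since $[\omega]^{2}>0$, the form restricted to $[\omega]^{\perp}$ is negative semidefinite, so $\alpha^{2}=0$ and $\alpha\cdot[\omega]=0$ for every basic class. Thus the (finite, $\alpha\mapsto-\alpha$ symmetric) set of basic classes lies entirely on the null cone of $[\omega]^{\perp}$.

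The substantive case is therefore $b_{+}\ge 2$, and here I expect the \emph{main obstacle}: the index count and the light-cone structure above are not by themselves enough to force $b_{+}\le 3$, and one must feed in finer gauge-theoretic information. The tool is Bauer's refinement of the Seiberg--Witten invariant and the Bauer--Furuta stable cohomotopy invariant, i.e.\ a $\tfrac{10}{8}$-type input: for a minimal symplectic $4$-manifold with $c_{1}^{2}=0$ this invariant can be computed, and its nonvanishing, together with the description of the basic classes as a conjugation-symmetric subset of the null cone of $[\omega]^{\perp}$, is incompatible with $b_{+}>3$. (In the spin subcase this already follows from Furuta's theorem, since $K3$ saturates $b_{2}=\tfrac{10}{8}|\tau|+2$; the non-spin subcases are handled by Bauer's cohomotopy version.) Once $b_{+}\le 3$ is established, the elementary arithmetic of the first paragraph finishes the proof, yielding $\chi\in\{0,12,24\}$, $\tau\in\{-16,-8,0\}$, $b_{+}\le 3$, $b_{-}\le 19$ and $b_{1}\le 4$.
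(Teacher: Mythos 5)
There is a genuine gap, and it sits exactly at the heart of the statement. Note first that the paper offers no proof of this theorem at all: it is quoted as Li's result [22], so there is no in-paper argument to compare with, and your proposal is in effect an attempt to reprove Li's theorem. Your arithmetic reduction is correct and cleanly organized: from $c_{1}^{2}=2\chi+3\tau=0$ one gets $b_{-}=4-4b_{1}+5b_{+}$, $\chi=6(1-b_{1}+b_{+})$, $\tau=-4(1-b_{1}+b_{+})$, the parity of $1-b_{1}+b_{+}$ follows from integrality of $\tfrac{\chi+\tau}{4}$, and then everything in the statement does follow from the single inequality $b_{+}\le 3$ together with $b_{-}\ge 0$. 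But that inequality is precisely the substance of Li's paper, and you do not prove it: the decisive sentence is the assertion that the Bauer--Furuta invariant ``is incompatible with $b_{+}>3$,'' which is the theorem itself, not an argument. You even flag this as the main obstacle, so the proposal is a correct reduction plus a citation-shaped placeholder where the proof should be.

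Two of the supporting claims you lean on are also wrong or insufficient as stated. First, for $b_{+}\ge 2$ the restriction of the intersection form to $[\omega]^{\perp}$ is \emph{not} negative semidefinite (it has signature $(b_{+}-1,b_{-})$), so your conclusion that every basic class lies on the null cone of $[\omega]^{\perp}$ does not follow from $\alpha^{2}\ge 0$ and $\alpha\cdot[\omega]=0$; that step only works when $b_{+}=1$. Second, Furuta's $10/8$ theorem is a \emph{lower} bound $b_{2}\ge \tfrac{10}{8}|\tau|+2$ for spin manifolds with $\tau\neq 0$; it does not by itself bound $b_{+}$ from above. What is actually needed is a vanishing statement of the form ``a spin$^{c}$ structure with torsion $c_{1}$ and nontrivial (refined) Seiberg--Witten/Bauer--Furuta invariant forces $b_{+}\le 3$,'' and establishing the requisite $Pin(2)$-type (quaternionic) symmetry when $c_{1}$ is torsion but not necessarily zero is exactly the content of Li's quaternionic-bundle machinery in [22] (together with the prior fact that $\kappa=0$ and minimality make $K_{\omega}$ torsion, not merely of square zero). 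Unless you either carry out that equivariant argument or cite it as an external black box in place of proving the theorem, the proposal does not establish $b_{+}\le 3$, and hence none of the stated bounds.
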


\begin{Theorem}
Let $(M, g, \omega)$ be a compact self-dual almost-K\"ahler four manifold with Kodaira dimension $0$. 
Then $(M, g, \omega)$ is locally flat K\"ahler.
\end{Theorem}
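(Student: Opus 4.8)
The plan is to use Li's theorem (Theorem 8) together with self-duality to constrain the topology, deduce that $g$ is conformally flat, and then---rather than trying to determine the Yamabe type of $[g]$ directly---promote $g$ to a K\"ahler metric by exploiting $c_{1}\cdot[\omega]=0$; once $g$ is a conformally flat K\"ahler metric the classification collapses to the flat case. The delicate point is that a head-on case analysis on the conformal type of $[g]$ does not close up: positive type is killed immediately by Corollary 1 and zero type by Lemma 2, but ruling out negative type directly is awkward. The device that avoids this is to establish K\"ahlerness before the conformal type is ever discussed.

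First I would fix the topology. Suppose $M$ is not minimal, so $M=M_{0}\#k\overline{\mathbb{CP}}_{2}$ with $k\geq 1$ and $M_{0}$ minimal symplectic; since the symplectic Kodaira dimension is that of the minimal model, $\kappa(M_{0})=0$, so Theorem 8 gives $\tau(M_{0})\leq 0$ and hence $\tau(M)=\tau(M_{0})-k<0$. But self-duality means $W_{-}=0$, so $\tau(M)=\frac{1}{12\pi^{2}}\int_{M}|W_{+}|^{2}d\mu\geq 0$, a contradiction; thus $M$ is minimal. Then Theorem 8 gives $\tau(M)\in\{-16,-8,0\}$, and $\tau(M)\geq 0$ forces $\tau(M)=0$ together with $W_{+}\equiv 0$, so $g$ is conformally flat. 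Moreover $\kappa=0$ and minimality make $c_{1}$ a torsion class, whence $c_{1}^{2}=0=2\chi+3\tau$ and therefore $\chi(M)=0$.

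Next I would show $(M,g,\omega)$ is K\"ahler. On an almost-K\"ahler four-manifold $|\omega|^{2}\equiv 2$, and from the curvature decomposition in the introduction the $\omega\omega$-component of $\mathfrak{R}$ is $\frac{s^{*}}{4}$ while the self-dual block is $W_{+}+\frac{s}{12}I$; hence $\langle W_{+}\omega,\omega\rangle=\frac{s^{*}}{2}-\frac{s}{6}$, and with $W_{+}=0$ this gives the pointwise relation $s^{*}=\frac{s}{3}$. Since $c_{1}$ is torsion, $c_{1}\cdot[\omega]=0$, and using the standard identity $\int_{M}s^{*}d\mu=4\pi\,c_{1}\cdot[\omega]$ for compact almost-K\"ahler four-manifolds we get $\int_{M}s\,d\mu=3\int_{M}s^{*}d\mu=0$. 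Substituting $W_{+}=0$ into the Weitzenb\"ock formula $\Delta\omega=\nabla^{*}\nabla\omega-2W_{+}(\omega,\cdot)+\frac{s}{3}\omega$ for the harmonic form $\omega$ yields $\nabla^{*}\nabla\omega=-\frac{s}{3}\omega$, and pairing with $\omega$ and integrating gives
\[\int_{M}|\nabla\omega|^{2}d\mu=-\frac{1}{3}\int_{M}s\,|\omega|^{2}d\mu=-\frac{2}{3}\int_{M}s\,d\mu=0,\]
so $\nabla\omega=0$ and $g$ is K\"ahler. (Only that $s^{*}$ be a nonzero constant multiple of $s$ pointwise is used here, so the precise normalization constants are inessential.)

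Finally, a conformally flat K\"ahler surface has $W_{+}$ with eigenvalues $\frac{s}{6},-\frac{s}{12},-\frac{s}{12}$, so $W_{+}=0$ forces $s\equiv 0$; thus $g$ is scalar-flat. Now either invoke Proposition 2---$(M,\omega)$ cannot be rational or ruled since $\kappa=0$, so $(M,g)$ is anti-self-dual and Ricci-flat---or simply use Gauss--Bonnet: with $W_{\pm}=0$ and $s=0$ one has $\chi=-\frac{1}{16\pi^{2}}\int_{M}|ric_{0}|^{2}d\mu$, and $\chi=0$ forces $ric_{0}\equiv 0$, so $g$ is Ricci-flat. A Ricci-flat conformally flat metric is flat, so $(M,g,\omega)$ is a flat K\"ahler manifold, i.e. locally flat K\"ahler. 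The main obstacle in this plan is precisely the exclusion of negative type noted above; the reason the argument works is that Step 2 never needs to know the conformal type, extracting K\"ahlerness from $W_{+}=0$, the vanishing $c_{1}\cdot[\omega]=0$, and the Weitzenb\"ock identity alone.
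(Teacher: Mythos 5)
Your proof is correct and follows essentially the same route as the paper's: minimality and $\tau=\chi=0$ from Li's theorem plus the signature formula, hence $W_{+}\equiv 0$; then $c_{1}\cdot[\omega]=0$ together with the pointwise relation $s^{*}=\frac{s}{3}$ and the Weitzenb\"ock formula forces $\nabla\omega=0$; finally the K\"ahler eigenvalue structure of $W_{+}$ gives $s\equiv 0$, and Gauss--Bonnet with $\chi=0$ gives Ricci-flatness, hence local flatness. One small correction: the identity you quote, $\int_{M}s^{*}\,d\mu=4\pi\,c_{1}\cdot[\omega]$, is not the standard Chern--Weil formula (as written it is equivalent to $\int_{M}|\nabla\omega|^{2}d\mu=0$, i.e.\ it holds only in the K\"ahler case); the correct identity, used elsewhere in the paper, is $4\pi\,c_{1}\cdot[\omega]=\int_{M}\frac{s+s^{*}}{2}\,d\mu$, which with $s^{*}=\frac{s}{3}$ still yields $\int_{M}s\,d\mu=0$, so your argument is unaffected.
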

\begin{proof}
By definition, for a minimal model of $(M, \omega)$, $c_{1}\cdot\omega=0$ and $c_{1}^{2}=0$ [21]. 
Note that by blowing up, $\chi$ increases by $1$ and $\tau$ decreases by $1$. 
By Theorem 8, the Euler number $\chi$ of $(M, g, \omega)$ is nonnegative. 
Since $c_{1}^{2}=2\chi+3\tau$, by blowing up $c_{1}^{2}$ decreases by $1$. 
Thus, for $(M, \omega)$, $c_{1}^{2}\leq 0$ and $\chi\geq 0$. 
Since $\tau\geq 0$ for a self-dual metric, we get $(M, \omega)$ is minimal and $\tau=0, \chi=0$. 
Thus, $c_{1}\cdot\omega=0$, $c_{1}^{2}=0$ and $(M, g, \omega)$ is self-dual and anti-self-dual almost-K\"ahler. 
For an almost-K\"ahler four manifold, we have $c_{1}\cdot\omega\leq 0$. 
Moreover, $c_{1}\cdot\omega=0$ if and only if $(M, g, \omega)$ is K\"ahler. 
Thus, $(M, g, \omega)$ is self-dual and anti-self-dual K\"ahler. 
Note that we have 
\[\chi=\frac{1}{8\pi^{2}}\int_{M}\left(\frac{{s}^{2}}{24}+|W_{+}|^{2}+|W_{-}|^{2}-\frac{|ric_{0}|^{2}}{2}\right)d\mu.\]
Since $\tau=\chi=s=0$, we get $g$ is Einstein.
\end{proof}

\vspace{50pt}

\section{\large\textbf{Self-dual almost-K\"ahler four manifolds with $J$-invariant Ricci tensor}}\label{S:Intro}
Let $(M, g)$ be a compact, smooth oriented riemannian four manifold and let $\Lambda^{-}M$ be the vector bundle of anti-self-dual 2-forms. 
Then Levi-Civita connection induces the map 
\[\nabla:\Lambda^{-}M\to \Lambda^{-}M\otimes T^{*}M.\]
$T^{*}M\otimes \Lambda^{-}M$ decomposes as $T^{*}M\oplus V$. 
Then the twistor operator $D$ is defined by 
\[D:\Lambda^{-}M\to T^{*}M\otimes \Lambda^{-}M\to V.\]
A twistor 2-form $\Phi$ is an anti-self-dual 2-form such that $\nabla\Phi$ is a section of the bundle $T^{*}M$.
Then from a Weitzenb\"ock formula [25] for a self-dual four manifold 
\[D^{*}D=\nabla^{*}\nabla-as,\]
where $a$ is a positive constant and $s$ is the scalar curvature. 
we get there does not exist a non-trivial twistor 2-form on a compact self-dual four manifold with negative scalar curvature. 
We note that a twistor 2-form is conformally invariant.

Let $(M, g, \omega)$ is a compact almost-K\"ahler four manifold with $J$-invariant Ricci tensor. 
$(M, g, \omega)$ is called weakly self-dual if $\delta W_{-}=0$. 
Let $Ric(X, Y)$ be the Ricci tensor. 
The Ricci form by 
\[\rho(X, Y)=Ric(JX, Y).\]
When the Ricci-tensor is $J$-invariant, $\rho$ is a closed 2-form [12].

\begin{Lemma}
Let $(M, g, \omega)$ be an almost-K\"ahler four manifold with $J$-invariant Ricci tensor. 
Let $\rho(X, Y)=Ric(JX, Y)$ and $\rho_{0}:=\rho-\frac{s}{4}\omega$. 
Then $\rho_{0}$ is an anti-self-dual 2-form. 
Moreover, if $\rho_{0}=0$, then $(M, g)$ is Einstein. 
\end{Lemma}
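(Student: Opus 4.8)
The plan is to read off the components of $\rho$ in the pointwise orthogonal decomposition $\Lambda^{2}=\omega\oplus[[\Lambda^{2,0}]]\oplus[\Lambda^{1,1}_{0}]$ recorded in the introduction and to identify $\rho_{0}$ with the $[\Lambda^{1,1}_{0}]$-component of $\rho$.

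First I would observe that $J$-invariance of the Ricci tensor, $Ric(JX,JY)=Ric(X,Y)$, forces $\rho(X,Y)=Ric(JX,Y)$ to be skew and $J$-invariant: $\rho(Y,X)=Ric(JY,X)=Ric(J^{2}Y,JX)=-Ric(JX,Y)=-\rho(X,Y)$, and the same manipulation gives $\rho(JX,JY)=\rho(X,Y)$. Hence $\rho$ is a real $(1,1)$-form, so it has no component in $[[\Lambda^{2,0}]]$ and we may write $\rho=\lambda\,\omega+\rho_{0}$ with $\lambda$ a smooth function and $\rho_{0}$ the component in $[\Lambda^{1,1}_{0}]=\Lambda^{-}$, which by definition is anti-self-dual. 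The whole statement then reduces to checking $\lambda=\tfrac{s}{4}$.

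To compute $\lambda$ I would pair with $\omega$. Since $\rho_{0}\perp\omega$ and $|\omega|^{2}=2$ we get $\langle\rho,\omega\rangle=2\lambda$. On the other hand, in an orthonormal coframe adapted to $J$, say $\omega=e^{1}\wedge e^{2}+e^{3}\wedge e^{4}$ with $Je_{1}=e_{2}$ and $Je_{3}=e_{4}$, one finds $\rho_{12}=\rho(e_{1},e_{2})=Ric(e_{2},e_{2})$ and likewise $\rho_{34}=Ric(e_{4},e_{4})$, so $\langle\rho,\omega\rangle=\rho_{12}+\rho_{34}$; using $J$-invariance to write $Ric(e_{1},e_{1})=Ric(e_{2},e_{2})$ and $Ric(e_{3},e_{3})=Ric(e_{4},e_{4})$, this equals $\tfrac12\bigl(Ric(e_{1},e_{1})+Ric(e_{2},e_{2})+Ric(e_{3},e_{3})+Ric(e_{4},e_{4})\bigr)=\tfrac{s}{2}$. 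Comparing the two expressions gives $\lambda=\tfrac{s}{4}$, so $\rho_{0}=\rho-\tfrac{s}{4}\omega$ is the $[\Lambda^{1,1}_{0}]=\Lambda^{-}$ part of $\rho$, hence anti-self-dual.

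For the last claim, if $\rho_{0}=0$ then $Ric(JX,Y)=\tfrac{s}{4}\,\omega(X,Y)=\tfrac{s}{4}\,g(JX,Y)$ for all $X,Y$, and since $X\mapsto JX$ is a pointwise linear isomorphism this is equivalent to $Ric=\tfrac{s}{4}\,g$, i.e. $(M,g)$ is Einstein. I do not expect a genuine obstacle in this argument: the only step requiring a little care is the normalization bookkeeping in the pairing $\langle\rho,\omega\rangle$ (equivalently the pointwise identity $\rho\wedge\omega=\tfrac{s}{2}\,d\mu$), while the single conceptual input — that a real $(1,1)$-form orthogonal to $\omega$ is the same as an anti-self-dual form, i.e. $\Lambda^{-}=[\Lambda^{1,1}_{0}]$ — is already given in the introduction.
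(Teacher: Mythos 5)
Your proposal is correct and follows essentially the same route as the paper: both arguments show $\rho$ is a $J$-invariant (real $(1,1)$) form, use $[\Lambda^{1,1}_{0}]=\Lambda^{-}$ to conclude anti-self-duality of $\rho_{0}$, and deduce the Einstein condition from $\rho_{0}=0$ via $Ric(JX,JY)=Ric(X,Y)$. The only difference is that you explicitly verify the normalization $\langle\rho,\omega\rangle=\tfrac{s}{2}$ (so that the $\omega$-component is exactly $\tfrac{s}{4}\omega$), a point the paper asserts without computation.
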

\begin{proof}
We note that 
\[\rho(JX, JY)=Ric(-X, JY)=Ric(JX, Y)=\rho(X, Y).\]
Since $\rho$ is $J$-invariant and $\rho_{0}\perp\omega$, $\rho_{0}$ is an anti-self-dual 2-form. 
Suppose $\rho_{0}=0$. 
Then we have 
\[0=\rho(X, JY)-\frac{s}{4}\omega(X, JY)=Ric(JX, JY)-\frac{s}{4}\omega(X, JY)=Ric(X, Y)-\frac{s}{4}g(X, Y).\]
Then $(M, g)$ is Einstein.

\end{proof}

Let $h=\frac{1}{2}Ric_{0}+\frac{s}{24}g$, where $Ric_{0}$ is the trace-free part of the Ricci tensor. 
Then the Cotton-York tensor is defined by
\[C_{X, Y}(Z)=-(\nabla_{X}h)(Y, Z)+(\nabla_{Y}h)(X, Z).\]
By the Second Bianchi Identity, we have $\delta W_{-}=C_{-}$, where $C_{-}$ is the anti-self-dual part of $C$. 
If $(M, g, \omega)$ is weakly self-dual, then $C_{-}=0$. 
From $\delta W_{-}=C_{-}=0$ and using $\rho$ is a closed 2-form, 
the following Matsumoto-Tanno identity was shown for a weakly self-dual almost-K\"ahler four manifold with $J$-invariant Ricci tensor [2]. 

\[\nabla_{X}\rho_{0}=-\frac{1}{2}d\tilde{s}(X)\omega+\frac{1}{2}(d\tilde{s}\wedge JX^{\flat}-Jd\tilde{s}\wedge X^{\flat}),\]
where $\tilde{s}=\frac{s}{6}$. 
This identity implies in particular that $\rho_{0}$ is a non-trivial twistor 2-form unless $(M, g)$ is Einstein [2].

\begin{Proposition}
Let $(M, g, \omega)$ be a compact, self-dual almost-K\"ahler four manifold with $J$-invariant Ricci tensor. 
Suppose the scalar curvature is constant and $(M, g)$ is not Einstein. 
Then the scalar curvature is zero. 
Moreover, $(M, g, \omega)$ is K\"ahler and $(M, g)$ is locally a product space of 2-dimensional riemannian manifolds of constant curvature $K$ and $-K$. 
\end{Proposition}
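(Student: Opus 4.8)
The plan is to establish first that the scalar curvature $s$ must be identically zero, and then to read off the remaining assertions directly from Theorem 7.

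For the vanishing of $s$, I would argue as follows. Self-duality of $(M,g)$ means $W_{-}=0$, so in particular $\delta W_{-}=0$; hence $(M,g,\omega)$ is weakly self-dual, and since the Ricci tensor is by hypothesis $J$-invariant, the Matsumoto--Tanno identity of [2] is available:
\[\nabla_{X}\rho_{0}=-\frac{1}{2}d\tilde{s}(X)\,\omega+\frac{1}{2}\left(d\tilde{s}\wedge JX^{\flat}-Jd\tilde{s}\wedge X^{\flat}\right),\qquad \tilde{s}=\frac{s}{6}.\]
Because $s$ is constant, $d\tilde{s}\equiv 0$, so this collapses to $\nabla\rho_{0}=0$: the anti-self-dual $2$-form $\rho_{0}=\rho-\frac{s}{4}\omega$ of Lemma 3 is parallel. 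A parallel form is harmonic and has $\nabla^{*}\nabla\rho_{0}=0$, so substituting $\rho_{0}$ into the Weitzenb\"ock formula $\Delta\alpha=\nabla^{*}\nabla\alpha-2W_{-}(\alpha,\cdot)+\frac{s}{3}\alpha$ for anti-self-dual $2$-forms and using $W_{-}=0$ yields $0=\frac{s}{3}\rho_{0}$. Since $(M,g)$ is not Einstein, the contrapositive of Lemma 3 gives $\rho_{0}\not\equiv 0$, and therefore $s\equiv 0$. (Alternatively one could split into cases: a positive constant $s$ would force $(M,g,\omega)$ to be $\mathbb{CP}_{2}$ with the Fubini--Study metric by Corollary 1, which is Einstein, a contradiction; a negative constant $s$ would make the parallel $\rho_{0}$ a nontrivial twistor $2$-form, impossible on a compact self-dual manifold of negative type. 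Either route gives $s\equiv 0$.)

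Once $s\equiv 0$, the metric $g$ itself has constant scalar curvature zero, so $(M,g,\omega)$ is a compact self-dual almost-K\"ahler four manifold of zero type, and Theorem 7 applies: either $(M,g,\omega)$ is locally flat K\"ahler, or it is K\"ahler with $(M,g)$ locally a Riemannian product of two $2$-dimensional manifolds of constant curvatures $K$ and $-K$. The first alternative makes $g$ flat, hence Ricci-flat, hence Einstein, which is excluded by hypothesis; so the second alternative holds, and this is precisely the assertion of the Proposition.

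There is no genuine analytic obstacle here; the steps that need care are bookkeeping ones. One must check that self-duality really does imply weak self-duality (so that the identity of [2] is licensed), that being parallel makes $\rho_{0}$ harmonic with $\nabla^{*}\nabla\rho_{0}=0$ so that the Weitzenb\"ock formula forces $s=0$, that "scalar curvature constant and equal to zero" is exactly the zero-type hypothesis of Theorem 7, and that the locally flat alternative of Theorem 7 is discarded by the non-Einstein assumption. The real content of the Proposition is thus the reduction—via the vanishing of $s$—to the classification already proved in Theorem 7.
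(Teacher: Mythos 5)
Your argument is correct, and its skeleton coincides with the paper's: the Matsumoto--Tanno identity with $s$ constant gives $\nabla\rho_{0}=0$, non-Einstein plus Lemma 3 gives $\rho_{0}\not\equiv 0$, and the conclusion is then read off from Theorem 7. The one step where you genuinely diverge is the deduction of $s=0$. The paper reverses orientation: since $\rho_{0}$ is parallel, $(\overline{M},g,\rho_{0})$ (suitably normalized) is K\"ahler, and self-duality of $g$ on $M$ makes this an anti-self-dual K\"ahler surface, which forces $s=0$ via the K\"ahler relation between $W_{+}$ and $s$. You instead stay on $M$ and plug the parallel anti-self-dual form $\rho_{0}$ into the Weitzenb\"ock formula $\Delta\alpha=\nabla^{*}\nabla\alpha-2W_{-}(\alpha,\cdot)+\frac{s}{3}\alpha$ with $W_{-}=0$, getting $s\,\rho_{0}=0$ pointwise and hence $s\equiv 0$ since the parallel $\rho_{0}$ has constant nonzero norm. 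Your route is slightly more self-contained (it avoids the normalization of $\rho_{0}$ into a K\"ahler form and the quoted fact about anti-self-dual K\"ahler surfaces), while the paper's route fits its recurring orientation-reversal theme and feeds directly into the same reduction; your explicit exclusion of the locally flat alternative of Theorem 7 by the non-Einstein hypothesis is a detail the paper leaves implicit, and your parenthetical case-split via Corollary 1 and the twistor-form vanishing is a valid, if unnecessary, alternative.
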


\begin{proof}
Suppose $(M, g, \omega)$ is not Einstein. Then the Ricci form is nontrivial.
From Matsumoto-Tanno identity, we get $\nabla\rho_{0}=0$ if $s$ is constant. 
Let $\overline{M}$ is $M$ with the opposite orientation. 
Then $(\overline{M}, g, \rho_{0})$ is K\"ahler. 
Since $g$ is self-dual on $M$, $(\overline{M}, g)$ is anti-self-dual. 
Thus, $(\overline{M}, g, \rho_{0})$ is an anti-self-dual K\"ahler surface. 
In particular, we get $s=0$. 
Then, we get $(M, g, \omega)$ is a compact, self-dual almost-K\"ahler four-manifold with zero scalar curvature. 
Then we get the result from Theorem 7. 
\end{proof}

\begin{Proposition}
Let $(M, g, \omega)$ be a compact almost-K\"ahler four manifold with $J$-invariant Ricci tensor. 
Suppose $\delta W_{-}=0$ and the scalar curvature is 0.
If $(M, g)$ is not Einstein, 
then $(M, g, \omega)$ is K\"ahler and $(M, g)$ is locally a product space of 2-dimensional riemannian manifolds of constant curvature $K$ and $-K$.

\end{Proposition}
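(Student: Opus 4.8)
The plan is to reduce the statement to Theorem 7 by showing first that $\rho_0$ is parallel and then that $(M,g)$ is in fact self-dual. First I would substitute the hypothesis $s\equiv 0$ into the Matsumoto--Tanno identity recorded above for weakly self-dual almost-K\"ahler four manifolds with $J$-invariant Ricci tensor: since $\tilde s=s/6$ is constant we have $d\tilde s=0$, and the identity degenerates to $\nabla_X\rho_0=0$ for every $X$. Thus $\rho_0$ is a parallel $2$-form. By Lemma 3 it is anti-self-dual, and --- this is where the non-Einstein hypothesis enters --- Lemma 3 also gives $\rho_0\not\equiv 0$, so, being parallel on the connected manifold $M$, it has constant positive norm. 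Note also that $s=0$ makes $\rho_0=\rho$ the Ricci form itself.

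Next I would use this parallel anti-self-dual $2$-form to produce a K\"ahler structure on $\overline M$, the manifold $M$ equipped with the reversed orientation, for which $\rho_0$ is \emph{self-dual}. The standard four-dimensional linear algebra shows that the skew-symmetric endomorphism $\hat\rho_0$ determined by $g(\hat\rho_0 X,Y)=\rho_0(X,Y)$ satisfies $\hat\rho_0^{\,2}=-\tfrac12|\rho_0|^2\,\mathrm{Id}$, so, after dividing by the constant $|\rho_0|/\sqrt2$, we obtain a $g$-orthogonal almost complex structure $J'$ whose fundamental form is a positive multiple of $\rho_0$; in particular $J'$ induces the orientation of $\overline M$. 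Since $\rho_0$ and $|\rho_0|$ are parallel, so is $J'$, hence $(\overline M,g,J')$ is a compact K\"ahler surface. (This is the same device used in the proof of Proposition 3.)

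Then I would invoke the pointwise identity $|W_+|^2=s^2/24$ valid on any K\"ahler surface with respect to its K\"ahler orientation --- equivalently, that a K\"ahler surface is anti-self-dual for that orientation exactly when $s\equiv 0$. Applied to $(\overline M,g,J')$ with $s=0$ this forces $W_+^{\overline M}=0$; since reversing the orientation interchanges $W_+$ and $W_-$, this is precisely $W_-^M=0$, i.e. $(M,g)$ is self-dual. At this point $(M,g,\omega)$ is a compact self-dual almost-K\"ahler four manifold with $s\equiv 0$, hence of zero type, so Theorem 7 applies. Its first alternative would make $g$ flat and hence Einstein, contradicting the hypothesis; so the second alternative holds, which is exactly the asserted conclusion that $(M,g,\omega)$ is K\"ahler and $(M,g)$ is locally a product of two-dimensional Riemannian manifolds of constant curvatures $K$ and $-K$.

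I expect the only delicate point to be the middle step: verifying that the normalized endomorphism built from $\rho_0$ squares to $-\mathrm{Id}$, is $g$-compatible, induces the reversed orientation, and --- because $\rho_0$ is parallel --- is itself parallel, so that $\overline M$ genuinely carries a K\"ahler structure. Everything else is either a direct quotation (the Matsumoto--Tanno identity, Lemma 3, Theorem 7) or the classical Weyl-curvature identity for K\"ahler surfaces, and no further computation is needed.
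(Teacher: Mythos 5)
Your proposal is correct and follows essentially the same route as the paper: the Matsumoto--Tanno identity with $s\equiv 0$ gives $\nabla\rho_0=0$, the non-Einstein hypothesis makes $\rho_0$ a nontrivial parallel anti-self-dual form, so $(\overline{M},g)$ carries a scalar-flat (hence anti-self-dual) K\"ahler structure, which means $(M,g)$ is self-dual and Theorem 7 applies. The extra detail you supply --- normalizing $\hat\rho_0$ to a parallel $J'$ on $\overline{M}$ and ruling out the flat alternative of Theorem 7 via the non-Einstein assumption --- just makes explicit what the paper leaves implicit.
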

\begin{proof}
From Matsumoto-Tanno identity, we get $\nabla\rho_{0}=0$. 
If $\rho_{0}$ is not trivial, we get a non-trivial parallel 2-form. 
Thus, $(\overline{M}, g, \rho_{0})$ is scalar-flat K\"ahler. 
In particular, $(\overline{M}, g, \rho_{0})$ is anti-self-dual K\"ahler. 
Then $(M, g, \omega)$ is a self-dual almost-K\"ahler four manifold with zero scalar curvature. 
Then the result follows from Theorem 7.

\end{proof}

We consider a compact self-dual almost-K\"ahler four manifold with $J$-invariant Ricci tensor. 
We note that more generally, weakly self-dual almost-K\"ahler four manifolds with $J$-invariant Ricci tensor were classified in [2]. 

\begin{Theorem}
Let $(M, g, \omega)$ be a compact self-dual almost-K\"ahler four manifold with $J$-invariant Ricci tensor. 
Then 
\begin{itemize}
\item $(M, g, \omega)$ is $\mathbb{CP}_{2}$ with the Fubini-Study metric up to rescaling; or 
\item $(M, g, \omega)$ is K\"ahler and $(M, g)$ is locally a product space of 2-dimensional riemannian manifolds of constant curvature $K$ and $-K$; or
\item $(M, g, \omega)$ is locally flat K\"ahler; or
\item $(M, g)$ is an Einstein metric with negative scalar curvature.
\end{itemize}
\end{Theorem}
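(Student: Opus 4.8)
The plan is to distinguish three cases according to the conformal type of $g$, which by the solution of the Yamabe problem [6],[27] is unambiguously positive, zero, or negative. If $[g]$ is of positive type, then $(M,g,\omega)$ is a compact self-dual almost-K\"ahler four manifold of positive type and Corollary 1 applies directly: $(M,g,\omega)$ is $\mathbb{CP}_{2}$ with the Fubini-Study metric up to rescaling, which is the first alternative. If $[g]$ is of zero type, then Theorem 7 applies and yields that $(M,g,\omega)$ is either locally flat K\"ahler, or K\"ahler with $(M,g)$ locally a product of two $2$-dimensional Riemannian manifolds of constant curvatures $K$ and $-K$; these are the second and third alternatives. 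Note that neither of these two cases uses the hypothesis that the Ricci tensor is $J$-invariant.

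The remaining, and essential, case is when $[g]$ is of negative type, and here I would show that $(M,g)$ is forced to be Einstein, yielding the fourth alternative. Since $g$ is self-dual we have $W_{-}=0$, hence $\delta W_{-}=0$, so $(M,g,\omega)$ is in particular weakly self-dual with $J$-invariant Ricci tensor and the Matsumoto-Tanno identity holds; thus $\rho_{0}=\rho-\frac{s}{4}\omega$ is a twistor $2$-form which, by that identity, is non-trivial unless $(M,g)$ is Einstein. Suppose for contradiction that $(M,g)$ is not Einstein, so $\rho_{0}\not\equiv 0$. Pick a metric $g'$ conformal to $g$ with constant negative scalar curvature. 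Self-duality and the twistor operator are both conformally invariant, so $g'$ is again self-dual and $\rho_{0}$, suitably rescaled, is again a non-trivial twistor $2$-form, now for $g'$. But on a compact self-dual four manifold with negative scalar curvature the Weitzenb\"ock formula $D^{*}D=\nabla^{*}\nabla-as$ with $a>0$ from [25] forces every twistor $2$-form to vanish, a contradiction. Hence $\rho_{0}=0$, so $(M,g)$ is Einstein by Lemma 3; and since an Einstein metric has constant scalar curvature, whose sign equals that of the (negative) Yamabe type, $(M,g)$ is Einstein with negative scalar curvature.

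The main obstacle is the negative-type case, and it is a matter of care rather than depth: one has to make sure that passing to the constant-scalar-curvature representative preserves both self-duality and the existence of the twistor $2$-form. Both are conformally invariant, so once this bookkeeping is in place the non-existence statement of [25] closes the argument at once, and the trichotomy above exhausts all possibilities.
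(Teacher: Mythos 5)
Your proposal is correct and follows essentially the same route as the paper: split by Yamabe type, apply Corollary 1 in the positive case and Theorem 7 in the zero case, and in the negative case combine self-duality (hence $\delta W_{-}=0$ and the Matsumoto-Tanno identity, making $\rho_{0}$ a non-trivial twistor $2$-form unless Einstein) with the conformal invariance of twistor $2$-forms and the Weitzenb\"ock formula $D^{*}D=\nabla^{*}\nabla-as$ from [25] to force the Einstein alternative. Your explicit bookkeeping about passing to a constant negative scalar curvature representative matches the paper's use of a conformal metric of negative scalar curvature.
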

\begin{proof}

Suppose the type is negative. 
Then there exists a conformally equivalent metric $\tilde{g}$ to $g$ such that 
$(M, \tilde{g})$ has negative scalar curvature. 
Let $D$ be the twistor operator on anti-self-dual two-forms. 
Then from the Weitzenb\"ock formula [25]
\[D^{*}D=\nabla^{*}\nabla-as,\]
where $a$ is positive constant, 
we get there is no twistor 2-form on $(M, \tilde{g})$. 
But a twistor 2-form is conformally invariant, and therefore there is no twistor 2-form on $(M, g)$. 
On the other hand, $\rho_{0}=\rho-\frac{s}{4}\omega$ is a non-trivial twistor two-form by the Matsumoto-Tanno identity 
unless $(M, g, \omega)$ is Einstein [2]. 
Thus the type is nonnegative if $(M, g)$ is not Einstein. 
Suppose the type is positive. 
Then by Corollary 1, we have $(M, g, \omega)$ is the Fubini-Study metric  $\mathbb{CP}_{2}$ up to rescaling.
Suppose the type is zero. Then we get the result from Theorem 7.
\end{proof}

A compact almost-K\"ahler four manifold with $J$-invariant Ricci tensor and nonpositive scalar curvature is either Einstein 
or scalar-flat K\"ahler if Bach tensor vanishes [4]. 
We note that self-duality implies Bach-flatness. 
We prove the following weaker result by assuming self-duality. 

\begin{Corollary}
Let $(M, g, \omega)$ be a compact almost-K\"ahler four manifold with $J$-invariant Ricci tensor. 
Suppose $(M, g)$ is self-dual with nonpositive scalar curvature. 
Then 
\begin{itemize}
\item $(M, g, \omega)$ is K\"ahler and $(M, g)$ is locally a product space of 2-dimensional riemannian manifolds of constant curvature $K$ and $-K$; or
\item $(M, g, \omega)$ is locally flat K\"ahler; or
\item $(M, g)$ is an Einstein metric with negative scalar curvature.
\end{itemize}
\end{Corollary}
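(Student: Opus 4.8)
The plan is to deduce this directly from Theorem 9 by eliminating the one alternative that does not appear in the statement, namely the Fubini--Study metric on $\mathbb{CP}_2$. First I would observe that the hypotheses here — compact almost-K\"ahler with $J$-invariant Ricci tensor and self-dual metric — are exactly those of Theorem 9, with the additional assumption that the scalar curvature is nonpositive (in particular the conformal type is zero or negative, never positive). So Theorem 9 applies and $(M,g,\omega)$ falls into one of its four cases. The Fubini--Study metric on $\mathbb{CP}_2$ has positive scalar curvature, and more to the point it is a positive-type conformal class; since the present hypothesis forces nonpositive scalar curvature, this case is excluded. The remaining three alternatives of Theorem 9 are precisely the three listed in Corollary 2, so we are done.

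A small point worth spelling out is \emph{why} the scalar curvature being nonpositive rules out the $\mathbb{CP}_2$ case rather than merely the Fubini--Study representative within a positive-type class. The Fubini--Study metric is Einstein with $s>0$; since it is also K\"ahler, it realizes the Yamabe invariant sign of its conformal class, so $[g_{FS}]$ is of positive type. But the manifold $\mathbb{CP}_2$, once it is known to carry a self-dual almost-K\"ahler metric, was shown in Theorem 5 (and Corollary 1) to force that metric to be Fubini--Study up to scale; there is no self-dual almost-K\"ahler metric on $\mathbb{CP}_2$ of zero or negative type. Hence if $(M,g)$ has nonpositive scalar curvature, $M$ is not diffeomorphic to $\mathbb{CP}_2$, and the first bullet of Theorem 9 cannot occur.

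I do not expect any genuine obstacle here: the corollary is a straightforward specialization of Theorem 9, and the only thing to check is the compatibility of the hypotheses and the exclusion of the positive-type case. If one wanted to be maximally self-contained one could instead re-run the argument of Theorem 9 directly: in the negative-type case the Matsumoto--Tanno identity produces a nontrivial twistor $2$-form $\rho_0$ unless $(M,g)$ is Einstein, while Poon's Weitzenb\"ock formula $D^*D = \nabla^*\nabla - as$ forbids twistor $2$-forms on a compact self-dual manifold of negative type, so $g$ is Einstein with $s<0$; in the zero-type case Theorem 7 gives the locally flat K\"ahler or the local product of surfaces of curvature $K$ and $-K$. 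Either way the three stated conclusions are exactly what remains.
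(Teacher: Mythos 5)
Your proposal is correct, but it reaches the corollary by a different route than the paper. You specialize the general classification (your ``Theorem 9,'' which is Theorem 10 in the paper) and strike the $\mathbb{CP}_{2}$ alternative because a rescaled Fubini--Study metric has $s>0$ everywhere, contradicting the hypothesis $s\leq 0$; that exclusion is immediate, and your extra paragraph about Yamabe type and Theorem 5 is not needed for it. The paper instead gives a short direct argument that never invokes Theorem 10: if $s\equiv 0$ it applies Theorem 7, while if $s$ is negative at some point it integrates the twistor Weitzenb\"ock formula $D^{*}D=\nabla^{*}\nabla-as$ against a twistor $2$-form, which with $s\leq 0$ forces the form to be parallel and then trivial, so by the Matsumoto--Tanno identity $(M,g)$ must be Einstein, necessarily with negative (constant) scalar curvature. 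The trade-off: your derivation is more economical and purely formal once Theorem 10 is available, whereas the paper's proof works directly with the pointwise sign of $s$ and avoids passing through conformal (Yamabe) type at all, which is the language in which Theorem 10's proof is organized; your own ``self-contained'' fallback, which splits by conformal type, is essentially the paper's argument once one observes that $s\leq 0$ with $s<0$ somewhere forces negative type (constant test function in the conformal Laplacian), a step the paper's pointwise Weitzenb\"ock integration sidesteps. Both arguments are sound and yield exactly the three stated alternatives.
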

\begin{proof}
Suppose $s=0$. 
Then we get the conclusion from Theorem 7. 
Suppose $s$ is negative at least one point. 
Then from the Weitzenb\"ock formula, 
\[D^{*}D=\nabla^{*}\nabla-as,\]
we get there is no non-trivial twistor 2-form. 
Thus, we get $(M, g, \omega)$ is Einstein.

\end{proof}

Below, we get one of the generalizations of Tanno's Theorem in a compact case. 
\begin{Corollary}
Let $(M, g, \omega)$ be a compact conformally flat almost-K\"aher four manifold with $J$-invariant Ricci tensor. 
Then 
\begin{itemize}
\item $(M, g, \omega)$ is K\"ahler and $(M, g)$ is locally a product space of 2-dimensional riemannian manifolds of constant curvature $K$ and $-K$; or
\item $(M, g, \omega)$ is locally flat K\"ahler.

\end{itemize}

\end{Corollary}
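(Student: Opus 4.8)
The plan is to reduce at once to the classification already in hand. Conformal flatness means the full Weyl tensor vanishes, i.e.\ $W_{+}\equiv 0$ and $W_{-}\equiv 0$; in particular $W_{-}\equiv 0$, so $(M,g)$ is self-dual, and the four-case classification of Theorem 10 applies. It therefore suffices to decide which of its four alternatives are compatible with the extra condition $W_{+}\equiv 0$. The Fubini--Study case on $\mathbb{CP}_{2}$ is ruled out because $\tau(\mathbb{CP}_{2})=1\neq 0$, whereas a conformally flat four-manifold has $\tau=\frac{1}{12\pi^{2}}\int_{M}(|W_{+}|^{2}-|W_{-}|^{2})\,d\mu=0$ (equivalently, the Fubini--Study metric is self-dual Einstein and not flat, so $W_{+}\neq 0$). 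The two K\"ahler alternatives --- locally flat K\"ahler, and K\"ahler locally a product of surfaces of curvatures $K$ and $-K$ --- are exactly the asserted conclusions, and they are consistent with the hypothesis, since a flat metric and a product of two surfaces of opposite constant curvature are both conformally flat.

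Thus the entire content of the corollary is to exclude the last alternative of Theorem 10: that $(M,g)$ is Einstein with negative scalar curvature. A conformally flat Einstein four-manifold has constant sectional curvature, so with $s<0$ it would be a compact hyperbolic manifold $\mathbb{H}^{4}/\Gamma$; I must show that no such manifold admits a compatible almost-K\"ahler structure. The route I would take is to prove that $(M,g,\omega)$ is in fact K\"ahler. Since $\omega$ is a self-dual harmonic two-form of constant length $\sqrt{2}$, the Weitzenb\"ock formula $0=\nabla^{*}\nabla\omega-2W_{+}(\omega,\cdot)+\frac{s}{3}\omega$ loses its Weyl term when $W_{+}\equiv 0$, and the Einstein condition forces $\rho_{0}=0$ by Lemma 3; feeding these into the Sekigawa-type integral Weitzenb\"ock identity for compact almost-K\"ahler four-manifolds --- whose only obstruction to concluding $\nabla\omega=0$ in the case of negative scalar curvature is precisely a term assembled from $W_{+}$ --- should yield $\nabla\omega=0$, i.e.\ $(M,g,\omega)$ K\"ahler. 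But a K\"ahler metric of constant sectional curvature in real dimension four is necessarily flat, which contradicts $s<0$. An alternative would be to invoke an a priori bound of the form $\int_{M}|W_{+}|^{2}\,d\mu\geq c\int_{M}s^{2}\,d\mu$ with $c>0$ for compact almost-K\"ahler four-manifolds: with $W_{+}\equiv 0$ this forces $s\equiv 0$, whence $(M,g,\omega)$ is of zero type and Theorem 7 gives the conclusion directly.

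I expect this last step to be the main obstacle. Ruling out a conformally flat Einstein metric of negative scalar curvature on an almost-K\"ahler four-manifold is a conformally flat instance of the negative-scalar-curvature Goldberg conjecture, and the point that requires care is verifying that the integral Weitzenb\"ock argument genuinely closes once $W_{+}$ is dropped --- equivalently, that the estimate comparing $\int_{M}|W_{+}|^{2}$ with $\int_{M}s^{2}$ is available in the almost-K\"ahler setting. Everything else is bookkeeping over the four cases of Theorem 10.
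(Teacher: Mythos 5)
Your reduction via Theorem 10 is exactly the paper's starting point, and your elimination of the Fubini--Study case (conformal flatness forces $\tau=0$) is fine. The problem is the case you yourself identify as ``the main obstacle'': excluding a conformally flat Einstein almost-K\"ahler metric with $s<0$. That step is the entire content of the corollary, and your proposal does not actually prove it. The pointwise Weitzenb\"ock identity with $W_{+}\equiv 0$ gives $|\nabla\omega|^{2}=-\tfrac{2s}{3}>0$, so it is perfectly consistent with a strictly almost-K\"ahler structure on a hyperbolic manifold; ruling this out needs a genuine global input, and your two suggestions are only gestures at one. The claim that the Sekigawa-type integral formula ``closes once $W_{+}$ is dropped'' is asserted, not verified (the known Sekigawa argument handles $s\geq 0$; the negative case is the open Goldberg problem, and the partial results with hypotheses on $W_{+}$ have nontrivial proofs). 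Likewise, the proposed a priori bound $\int_{M}|W_{+}|^{2}\,d\mu\geq c\int_{M}s^{2}\,d\mu$ for general compact almost-K\"ahler four-manifolds is not an available fact: the standard estimates bound $|W_{+}|$ from below by quantities like $\tfrac{s+s^{*}}{12}$, which have no fixed sign when $s<0$, so nothing forces $s\equiv 0$ from $W_{+}\equiv 0$ alone without further argument.

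For comparison, the paper closes this case by elementary means plus one quotable theorem: with $W_{+}=0$ one has $s^{*}=\tfrac{s}{3}$, so $s^{*}\neq s$ everywhere when $s<0$; Armstrong's theorem then forces $5\chi+6\tau=0$, hence $\chi=0$ since $\tau=0$, while Gauss--Bonnet for a conformally flat Einstein metric gives $\chi=\tfrac{1}{8\pi^{2}}\int_{M}\tfrac{s^{2}}{24}\,d\mu>0$ when $s<0$ --- a contradiction. Alternatively, your route could be repaired by invoking the Apostolov--Armstrong result the paper itself uses in Theorem 11 (a compact almost-K\"ahler Einstein four-manifold with $W_{+}^{3}=0$ is K\"ahler): $W_{+}\equiv 0$ certainly implies $W_{+}^{3}=0$, and a K\"ahler anti-self-dual surface is scalar-flat, contradicting $s<0$. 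As written, however, the decisive step is missing.
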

\begin{proof}
We use Theorem 10.  Suppose $(M, g, \omega)$ is almost-K\"ahler Einstein with negative scalar curvature. 
For an almost-K\"ahler four manifold, we have 
\[s^{*}:=2R(\omega, \omega)=2\left(W_{+}(\omega, \omega)+\frac{s}{12}|\omega|^{2}\right).\]
From the Weitzenb\"ock formula, we have $0=|\nabla\omega|^{2}-2W_{+}(\omega, \omega)+\frac{s}{3}|\omega|^{2}$. 
This gives 
\[s^{*}=s+|\nabla\omega|^{2}.\]
Thus, for an anti-self-dual almost-K\"ahler four-manifold, we have $s^{*}=\frac{s}{3}$.
On the other hand, there is a point
where $s^{*}$ and $s$ are the same unless $5\chi+6\tau=0$ by Armstrong's theorem [5]. 
Thus, we get either $s\equiv 0$ or $5\chi+6\tau=0$. 
Suppose $5\chi+6\tau=0$. Since $\tau=0$, we get $\chi=0$. 
From the following formula, we note that for a conformally flat Einstein metric, we have $\chi=0$ if and only if $s=0$
\[\chi=\frac{1}{8\pi^{2}}\int_{M}\left(\frac{s^{2}}{24}+|W_{+}|^{2}+|W_{-}|^{2}-\frac{|ric_{0}|^{2}}{2}\right)d\mu.\]
Since $s<0$, we get a contradiction. 
\end{proof}

Let $(M, g, J)$ be an almost-Hermitian manifold. 
A two-plane $X\wedge Y$ is called totally-real if $JX\wedge JY$ is orthogonal to $X\wedge Y$. 
The totally-real sectional curvature is said to be constant at the point $p\in M$ if this curvature is constant for all totally-real two-planes at $p$. 
If the totally-real sectional curvature is constant at each point $p\in M$, $(M, g, J)$ is said to have pointwise constant totally-real sectional curvature.

Let $(M, g, \omega)$ be a compact almost-K\"ahler four-manifold with pointwise constant totally-real sectional curvature. 
Then $(M, g, \omega)$ is self-dual and has $J$-invariant Ricci tensor and $W_{+}^{3}=0$ [4]. 
Such $(M, g, \omega)$ is K\"ahler [1], [2], [4]. 
We show that Theorem 10 gives a different proof of this result. 

\begin{Theorem}
(Apostolov-Calderbank-Gauduchon) 
Let $(M, g, \omega)$ be a compact almost-K\"ahler four-manifold with pointwise constant totally-real sectional curvature. 
Then $(M, g, \omega)$ is self-dual K\"ahler surface. 
\end{Theorem}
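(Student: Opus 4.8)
The strategy is to invoke the earlier structural results and show that the three "non-Kähler-Einstein" alternatives in Theorem 10 cannot occur, or when they do they already force the Kähler/self-dual conclusion. First I would record the two facts quoted from [4] just before the statement: a compact almost-Kähler four-manifold $(M,g,\omega)$ with pointwise constant totally-real sectional curvature is automatically self-dual, has $J$-invariant Ricci tensor, and satisfies $W_{+}^{3}=0$. Hence $(M,g,\omega)$ is exactly in the situation of Theorem 10, so $(M,g,\omega)$ is one of: $\mathbb{CP}_2$ with (a rescaling of) the Fubini–Study metric; Kähler and locally a product of two surfaces of curvature $K$ and $-K$; locally flat Kähler; or Einstein with negative scalar curvature. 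In the first three cases $(M,g,\omega)$ is Kähler and self-dual, which is the desired conclusion (the Fubini–Study metric and the flat metric being self-dual Kähler, and the product case having been shown self-dual Kähler in Theorem 7 / Theorem 6). So the whole content is to rule out, or reinterpret, the last case: an almost-Kähler Einstein metric with negative scalar curvature.

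The plan for that case is to reuse the argument already carried out in the proof of Corollary 3. There I would use the almost-Kähler Weitzenböck identity $s^{*}=s+|\nabla\omega|^{2}$ together with $s^{*}=2\big(W_{+}(\omega,\omega)+\tfrac{s}{12}|\omega|^{2}\big)$; since $W_{+}^{3}=0$ for our manifold, the relation $s^{*}=s$ is equivalent to $|\nabla\omega|^{2}=0$, i.e. to $(M,g,\omega)$ being Kähler. By Armstrong's theorem [5], on a compact almost-Kähler four-manifold there is a point where $s^{*}=s$ unless $5\chi+6\tau=0$. If such a point exists, then at that point $|\nabla\omega|^{2}=0$; combined with the structure of an Einstein self-dual metric (where $W_{+}$ and hence $s^{*}$ have controlled behaviour) one pushes this to $\nabla\omega\equiv 0$ — more directly, an Einstein metric has constant scalar curvature, so one is in the constant-scalar-curvature situation, and the argument of Proposition 3 (via the Matsumoto–Tanno identity, which applies since $\delta W_{-}=0$ trivially for a self-dual metric and the Ricci tensor is $J$-invariant) would then force $\nabla\rho_0=0$ and, if $\rho_0\neq 0$, a scalar-flat Kähler structure on $\overline M$, contradicting $s<0$. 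Hence $\rho_0=0$, but Lemma 3 says $\rho_0=0$ gives Einstein, which we already assumed — the real output is that the almost-complex structure must be integrable, so $(M,g,\omega)$ is Kähler–Einstein and self-dual, again the desired conclusion. The remaining subcase $5\chi+6\tau=0$ is handled exactly as in Corollary 3: since $\tau=0$ (negative type forces this, or one uses that a compact self-dual Einstein manifold with $s<0$ has $\tau$ with a definite sign), we get $\chi=0$, and then the Gauss–Bonnet expression $\chi=\tfrac{1}{8\pi^2}\int_M\big(\tfrac{s^2}{24}+|W_+|^2+|W_-|^2-\tfrac{|ric_0|^2}{2}\big)d\mu$ together with self-duality ($W_-=0$) and Einstein ($ric_0=0$) gives $\chi=\tfrac{1}{8\pi^2}\int_M\big(\tfrac{s^2}{24}+|W_+|^2\big)d\mu\geq 0$, with equality forcing $s=0$, contradicting $s<0$.

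The main obstacle I anticipate is the Einstein, negative-scalar-curvature subcase: one must be careful that "$W_+^3=0$" really does upgrade the pointwise identity $s^*=s$ at one point to global integrability of $J$, and that Armstrong's dichotomy is being applied correctly (in particular that $5\chi+6\tau=0$ cannot sneak through). The cleanest route is probably the one sketched above — reduce to constant scalar curvature via Einstein, then quote Proposition 3 verbatim (it only needs $\delta W_-=0$, $J$-invariant Ricci, constant $s$, and not-Einstein), concluding that the "not Einstein" hypothesis fails, i.e. $J$ is integrable. Then assemble: in every case of Theorem 10 the manifold is Kähler and self-dual, which is precisely the assertion of Theorem 12.
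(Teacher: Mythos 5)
Your reduction is the same as the paper's up to the decisive point: you quote the facts from [4] (self-duality, $J$-invariant Ricci tensor, $W_{+}^{3}=0$), invoke Theorem 10, and correctly observe that the first three alternatives are already self-dual K\"ahler, so everything hinges on the fourth alternative, an almost-K\"ahler Einstein metric with negative scalar curvature. There, however, your argument does not establish integrability of $J$, and this is a genuine gap rather than a detail. Armstrong's theorem only produces a single point at which $s^{*}=s$, i.e.\ a single point where $\nabla\omega=0$; nothing in your sketch upgrades this to $\nabla\omega\equiv 0$, and the remark that $W_{+}$ has ``controlled behaviour'' is not an argument. The fallback through Proposition 3 and the Matsumoto--Tanno identity is vacuous in this case: when $g$ is Einstein one has $\rho_{0}=\rho-\frac{s}{4}\omega\equiv 0$ automatically, Proposition 3 explicitly assumes $(M,g)$ is \emph{not} Einstein, and the Matsumoto--Tanno identity controls $\rho_{0}$, not $\nabla\omega$; so the conclusion ``the real output is that the almost-complex structure must be integrable'' is a non sequitur. (Your treatment of the branch $5\chi+6\tau=0$ is essentially fine --- self-duality gives $\tau\geq 0$ rather than $\tau=0$, and then the Gauss--Bonnet formula for a self-dual Einstein metric forces $\chi=0$ and $s=0$, contradicting $s<0$ --- but this disposes of only one branch of Armstrong's dichotomy.)

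The paper closes exactly this case by citing Apostolov--Armstrong [1]: a compact almost-K\"ahler Einstein four-manifold with $W_{+}^{3}=0$ is K\"ahler. That is a substantive theorem (a Goldberg-type integrability result under the Hermitian Weyl condition), and it is precisely the ingredient missing from your write-up; the Weitzenb\"ock identity $s^{*}=s+|\nabla\omega|^{2}$, Armstrong's Euler-characteristic theorem, and the twistor-form machinery of Section 4 do not substitute for it. Unless you quote that result (or supply an equivalent argument), the Einstein negative-scalar-curvature case remains open in your proof.
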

\begin{proof}
By Theorem 10, we can assume $(M, g, \omega)$ is almost-K\"ahler Einstein. 
A compact almost-K\"ahler Einstein four manifold 
with $W_{+}^{3}=0$ is K\"ahler [1]. 
From this, we get $(M, g, \omega)$ is a self-dual K\"ahler surface. 
\end{proof}

The first canonical Hermitian connection is defined by 
\[\tilde{\nabla}|_{X}Y:=\nabla_{X}Y-\frac{1}{2}J(\nabla_{X}J)Y.\]
Then the Hermitian holomorphic sectional curvature is defined by 
\[H(Z)=-R^{\tilde{\nabla}}(Z\wedge \bar{Z}, Z\wedge \bar{Z}),\]
for a unit vector $Z\in TM^{1, 0}$. 
If we use Levi-Civita connection, we get the holomorphic sectional curvature. 
$H(Z)$ is constant at the point $p\in M$ if $H(Z)$ is constant $k(p)$ for all $Z\in T^{1, 0}_{p}M$. 
If $H$ is constant at each point $p\in M$, $H$ is said to be pointwise constant. 
Using Theorem 10, we prove the following result by Lejmi and Upmeier [20]. 

\begin{Theorem}
Let $(M, g, \omega)$ be a compact almost-K\"ahler four manifold with 
nonnegative pointwise constant Hermitian holomorphic sectional curvature $k$. 
Then $(M, g, \omega)$ is either to  $\mathbb{CP}_{2}$ with the Fubini-Study metric up to rescaling or locally flat K\"ahler. 
\end{Theorem}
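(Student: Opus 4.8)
The plan is to reduce the statement to Theorem 10. First I would recall from Lejmi and Upmeier [20] the structural fact that a compact almost-K\"ahler four manifold with pointwise constant Hermitian holomorphic sectional curvature is automatically self-dual, has $J$-invariant Ricci tensor, and satisfies $W_{+}^{3}=0$; this is the analogue, for the Hermitian holomorphic sectional curvature, of the property recorded above for pointwise constant totally-real sectional curvature. Granting this, $(M,g,\omega)$ is a compact self-dual almost-K\"ahler four manifold with $J$-invariant Ricci tensor, so Theorem 10 leaves exactly four cases: (i) $\mathbb{CP}_{2}$ with a constant multiple of the Fubini-Study metric; (ii) $(M,g,\omega)$ is K\"ahler with $(M,g)$ locally a product of two surfaces of constant curvature $K$ and $-K$; (iii) $(M,g,\omega)$ is locally flat K\"ahler; (iv) $(M,g)$ is Einstein with negative scalar curvature. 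Cases (i) and (iii) are exactly the asserted conclusion, so the work is to show that (ii) degenerates to (iii) and that (iv) cannot occur.

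Case (ii) is handled by an elementary computation. Since the metric is K\"ahler, the first canonical Hermitian connection coincides with the Levi-Civita connection, so the Hermitian holomorphic sectional curvature is the ordinary holomorphic sectional curvature; and on a Riemannian product of two complex curves all mixed components of the curvature tensor vanish, so the holomorphic sectional curvature of a unit $(1,0)$-vector tangent to the first factor is a fixed positive multiple of $K$, while that of a unit $(1,0)$-vector tangent to the second factor is the same multiple of $-K$. Pointwise constancy of the Hermitian holomorphic sectional curvature forces $K=0$, so $(M,g,\omega)$ is locally flat K\"ahler, which is case (iii).

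For case (iv) I would first upgrade $(M,g,\omega)$ to a K\"ahler metric. Since $W_{+}^{3}=0$ by the first step and $(M,g)$ is almost-K\"ahler Einstein, the result of [1] used already in the proof of Theorem 11 (a compact almost-K\"ahler Einstein four manifold with $W_{+}^{3}=0$ is K\"ahler) gives that $(M,g,\omega)$ is K\"ahler, hence K\"ahler-Einstein with negative scalar curvature. But on a K\"ahler surface the holomorphic sectional curvature, averaged over the unit sphere of $T^{1,0}_{p}M$, equals $\frac{1}{6}s(p)$; since here this curvature is the pointwise constant $k\geq 0$, we get $s=6k\geq 0$, contradicting $s<0$. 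Hence case (iv) cannot occur, and together with the previous paragraph this shows that $(M,g,\omega)$ is either $\mathbb{CP}_{2}$ with a rescaled Fubini-Study metric or locally flat K\"ahler.

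The main obstacle is case (iv): excluding a negative Einstein metric is in substance a Goldberg-type assertion, and it is here that one needs the full strength of the structural input of [20], namely that the curvature hypothesis already forces $W_{+}^{3}=0$, rather than only self-duality together with the $J$-invariant Ricci condition, which by themselves still permit compact self-dual K\"ahler-Einstein surfaces of negative scalar curvature such as ball quotients. Alternatively, one expects K\"ahlerness can be forced in case (iv) by noting that $k\geq 0$ makes a suitable Hermitian scalar curvature nonnegative, whose integral against $\frac{1}{2}\omega^{2}$ is a positive multiple of $c_{1}\cdot\omega\leq 0$, so $c_{1}\cdot\omega=0$. Once K\"ahlerness is available, the sign obstruction from the averaging identity is immediate, and the remaining cases reduce to elementary curvature computations on products.
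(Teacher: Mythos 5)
There is a genuine gap at the very first step, and the whole reduction collapses with it. Your plan rests on the claim that, by [20], pointwise constant Hermitian holomorphic sectional curvature forces not only self-duality but also $J$-invariance of the Ricci tensor and $W_{+}^{3}=0$, ``the analogue'' of the totally-real sectional curvature statement from [4]. That analogue is not a result of [20] and is not available: what the hypothesis gives (and what the paper uses) is only that $(M,g,\omega)$ is self-dual together with the pointwise identity $|R_{F}|^{2}=\frac{1}{4}\bigl(\frac{s^{*}-s}{8}\bigr)^{2}$ and the formula $k=\frac{s^{*}}{16}+\frac{5s}{48}$. The curvature component governing $J$-invariance of the Ricci tensor is $R_{0}$, about which the hypothesis says nothing; this is exactly why the paper's Theorem 13 adds ``$J$-invariant Ricci tensor'' as a separate assumption for the same curvature condition --- if it followed automatically, that hypothesis would be redundant and the Lejmi--Upmeier theorem would be an easy corollary of Theorem 10. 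Since your appeal to Theorem 10, and your exclusion of the negative Einstein case via [1] (which needs $W_{+}^{3}=0$), both hinge on this unsupported structural claim, the argument does not get off the ground; your fallback sketch for case (iv) (``a suitable Hermitian scalar curvature is nonnegative, and its integral is a positive multiple of $c_{1}\cdot\omega\leq 0$'') is also too vague to repair it, since no inequality $c_{1}\cdot\omega\leq 0$ is available in this generality. The later pieces of your case analysis (the product computation forcing $K=0$, and the averaging identity $s=6k$ on a K\"ahler surface) are fine, but they are only reached after the unjustified step.

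For comparison, the paper's proof of this theorem does not pass through Theorem 10 at all. From $c_{1}\cdot\omega=\frac{1}{2\pi}\int_{M}3k\,d\mu+\frac{1}{2\pi}\int_{M}\frac{s^{*}-s}{16}\,d\mu$ and $s^{*}-s=|\nabla\omega|^{2}\geq 0$, if $s^{*}\not\equiv s$ and $k\geq 0$ then $c_{1}\cdot\omega>0$, so $M$ is rational or ruled; self-duality then leaves either a manifold diffeomorphic to $\mathbb{CP}_{2}$ (excluded because Theorem 5 would make the metric Fubini--Study, hence K\"ahler) or $\tau=0$, i.e. a conformally flat almost-K\"ahler metric, where $s^{*}=\frac{s}{3}$ gives $k=\frac{s}{6}$, and $k\geq 0$ together with $s\leq 0$ forces $s\equiv 0$ and K\"ahlerness, again a contradiction. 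Hence $s^{*}\equiv s$ and $(M,g,\omega)$ is K\"ahler; the case $k=0$ yields $R_{F}=R_{0}=0$, so a self-dual Ricci-flat K\"ahler, hence locally flat, metric, while $k>0$ somewhere gives $s\geq 0$ with $s>0$ somewhere, $b_{-}=0$ by the Weitzenb\"ock formula, $M$ diffeomorphic to $\mathbb{CP}_{2}$, and then Theorem 5 applies. To salvage your approach you would need an actual proof that the Ricci tensor is $J$-invariant (and that $W_{+}^{3}=0$) under the stated hypothesis, or else a direct argument of the paper's kind.
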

\begin{proof}
The curvature of an almost-Hermitian four manifold decomposes according to the decomposition 
of $\Lambda^{+}=\mathbb{C}\omega\oplus \Lambda^{2. 0}\oplus \Lambda^{0, 2}$, $\Lambda^{-}=\Lambda_{0}^{1, 1}$.

\[ 
\mathfrak{R}=
\LARGE
\begin{pmatrix}

\begin{array}{c|c|c}
\scriptscriptstyle{a}& \scriptscriptstyle{ W_{+}^{F}}&\scriptscriptstyle{R_{F}}\\
\hline

\scriptscriptstyle{W_{+}^{F*}}&\scriptscriptstyle{W_{+}^{3}+\frac{1}{2}bI}&\scriptscriptstyle{R_{0}}\\
\hline

\scriptscriptstyle{R_{F}^{*}}&\scriptscriptstyle{R_{0}^{*}}&\scriptscriptstyle{W_{-}+\frac{1}{3}cI}\\

\end{array}

\end{pmatrix}
\]

The pointwise constant Hermitian holomorphic sectional curvature implies [20]
$(M, g, \omega)$ is self-dual and 
\[|R_{F}|^{2}=\frac{1}{4}\left(\frac{s^{*}-s}{8}\right)^{2}.\]

Moreover, the Hermitian holomorphic sectional curvature is given by [20]
 \[k=\frac{s_{*}}{16}+\frac{5s}{48}.\]

Then we have [20] 
\[c_{1}\cdot\omega=\frac{1}{2\pi}\int_{M}\left(\frac{s^{*}+s}{4}\right)d\mu=\frac{1}{2\pi}\int_{M}\left(\frac{3s^{*}}{16}+\frac{5s}{16}\right)d\mu
+\frac{1}{2\pi}\int_{M}\left(\frac{s^{*}-s}{16}\right)d\mu\]
\[=\frac{1}{2\pi}\int_{M}(3k)d\mu+\frac{1}{2\pi}\int_{M}\left(\frac{s^{*}-s}{16}\right)d\mu.\]
Suppose $s^{*}-s$ is not identically zero. Since $s^{*}-s\geq 0$, we have
$\int_{M}(s^{*}-s)d\mu>0$. Then we get $c_{1}\cdot\omega>0$ if $k\geq 0$. 
Then we get $(M, g, \omega)$ is a rational or ruled surface [23], [30]. 
Since $(M, g)$ is self-dual, we get $M$ is diffeomorphic to $\mathbb{CP}_{2}$ or 
$M$ has $\tau=0$ and therefore, $(M, g, \omega)$ is conformally flat. 
If $M$ is diffeomorphic to $\mathbb{CP}_{2}$, then $(M, g, \omega)$ is the Fubini-Study metric $\mathbb{CP}_{2}$ 
up to rescaling by Theorem 5. This is a contradiction.

Suppose $M$ has $\tau=0$. Then $(M, g, \omega)$ is a conformally flat almost-K\"ahler four manifold.
For an anti-self-dual almost-K\"ahler four-manifold, we have $s^{*}=\frac{s}{3}$.
Moreover, $s\leq 0$, with equality if and only if $(M, g, \omega)$ is K\"ahler. Thus, we have 
\[k=\frac{s}{6}.\]
Since $k\geq0$, we get $s\equiv 0$. 
Therefore, $(M, g, \omega)$ is K\"ahler, which is a contradiction. 
Thus, we get $s^{*}=s$, and $(M, g, \omega)$ is K\"ahler. 

Suppose $k=0$. Then we have $s\equiv 0$. 
Since $(M, g, \omega)$ is K\"ahler, $R_{0}=0$. 
From $|R_{F}|^{2}=\frac{1}{4}\left(\frac{s^{*}-s}{8}\right)^{2}$, we get $R_{F}=0$. 
Thus, $(M, g, \omega)$ is self-dual Ricci-flat K\"ahler, which implies $(M, g, \omega)$ is locally flat K\"ahler. 

Suppose $k>0$ at least one point on $M$. 
Since $(M, g, \omega)$ is K\"ahler, we have $k=\frac{s}{6}$. 
Thus, $s\geq 0$ and $s$ is positive somewhere. 
Then $4\pi c_{1}\cdot\omega=\int_{M}sd\mu>0$. Therefore, $M$ is diffeomrophic to a rational or ruled surface [23], [30]. 
Suppose $b_{-}\neq 0$. Then there exists an anti-self-dual harmonic 2-form $\alpha$. 
From the Weitzenb\"ock formula, we get 
\[\int_{M}\left(|\nabla\alpha|^{2}+\frac{s}{3}|\alpha|^{2}\right)d\mu=0.\]
Since $s\geq 0$, we get $\nabla\alpha=0$ and $\frac{s}{3}|\alpha|^{2}=0$. 
If $\nabla\alpha=0$, then $|\alpha|$ is positive constant unless $\alpha=0$. 
Since $s>0$ lat least one point, we get a contradiction. 
Thus, $\alpha=0$ and therefore $b_{-}=0$. 
Then we get $M$ is diffeomorphic to $\mathbb{CP}_{2}$.
By Theorem 5, a self-dual almost-K\"ahler metric on a manifold which is diffeomorphic to $\mathbb{CP}_{2}$ is the Fubini-Study metric up to rescaling. 
\end{proof}

By assuming $k<0$, Lejmi and Upmeier proved the following result [20]. 
We show this result without assuming $k<0$.

\begin{Theorem}
Let $(M, g, \omega)$ be a compact almost-K\"ahler four manifold of pointwise constant Hermitian holomorphic sectional curvature $k$. 
Suppose $(M, g, \omega)$ has $J$-invariant Ricci tensor.
Then 
\begin{itemize}
\item $(M, g, \omega)$ is $\mathbb{CP}_{2}$ with the Fubini-Study metric up to rescaling; or
\item $(M, g, \omega)$ is locally flat K\"ahler; or
\item $(M, g, \omega)$ is self-dual K\"ahler-Einstein with negative scalar curvature.  
\end{itemize}
\end{Theorem}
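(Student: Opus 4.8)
The plan is to invoke Theorem 10, whose hypotheses hold here: $(M,g,\omega)$ is a compact almost-K\"ahler four manifold with $J$-invariant Ricci tensor, and it is self-dual because pointwise constant Hermitian holomorphic sectional curvature forces self-duality [20] (as already used in the proof of Theorem 11). Thus $(M,g,\omega)$ belongs to one of the four classes of Theorem 10. The Fubini--Study case is the first alternative of the present theorem and the locally flat K\"ahler case is the second, so all that remains is to show that the other two classes --- ``K\"ahler, with $(M,g)$ locally a product of surfaces of constant curvature $K$ and $-K$'' and ``Einstein with negative scalar curvature'' --- reduce to the listed conclusions. Throughout I will use the two identities of [20] recorded in the proof of Theorem 11,
\[ |R_{F}|^{2}=\frac{1}{4}\left(\frac{s^{*}-s}{8}\right)^{2}, \qquad k=\frac{s^{*}}{16}+\frac{5s}{48}, \]
together with $s^{*}=s+|\nabla\omega|^{2}$.

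In the product case $(M,g,\omega)$ is K\"ahler, so $\tilde{\nabla}=\nabla$ and $H$ is the ordinary holomorphic sectional curvature. If $K\neq 0$, I would fix $p\in M$ and unit $(1,0)$-vectors $e_{1},e_{2}$ tangent to the two local factors; since the curvature tensor of a Riemannian product has no mixed components, $H(e_{1})$ equals a fixed positive multiple of $K$ while $H(e_{2})$ equals that same multiple of $-K$, so $H$ is not constant at $p$, contradicting the hypothesis. Hence $K=0$ and this class collapses to the locally flat K\"ahler case, the second alternative.

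In the Einstein case $ric_{0}=0$. In the almost-Hermitian block decomposition of $\mathfrak{R}$, the maps $R_{F}$ and $R_{0}$ are precisely the two blocks that make up the $\Lambda^{+}\!\leftrightarrow\!\Lambda^{-}$ part, namely $ric_{0}$; hence $ric_{0}=0$ forces $R_{F}=0$. The identity $|R_{F}|^{2}=\frac{1}{4}\left(\frac{s^{*}-s}{8}\right)^{2}$ then gives $s^{*}=s$, and $s^{*}-s=|\nabla\omega|^{2}$ yields $\nabla\omega=0$, so $(M,g,\omega)$ is K\"ahler. Combined with the Einstein condition, $s<0$, and the self-duality coming from the constant-$H$ hypothesis, $(M,g,\omega)$ is a self-dual K\"ahler--Einstein surface with negative scalar curvature, the third alternative.

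The only points requiring care are the identification of $R_{F}$ as a block of $ric_{0}$ (which is what lets the Einstein hypothesis eliminate it) and the direction-dependence of $H$ on the Riemannian product; neither is a genuine obstacle, so the argument is in effect a bookkeeping reduction of Theorem 10 using the two curvature identities from [20]. I note that the third alternative is not vacuous: compact quotients of complex hyperbolic $2$-space have constant holomorphic sectional curvature and are self-dual K\"ahler--Einstein of negative scalar curvature.
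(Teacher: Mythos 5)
Your argument is correct and follows the paper's overall strategy: invoke self-duality from the pointwise constant Hermitian holomorphic sectional curvature hypothesis [20], reduce to the four alternatives of Theorem 10, and then dispose of the two remaining cases. Your Einstein case is essentially identical to the paper's: since $R_{F}$ and $R_{0}$ are exactly the blocks of $ric_{0}$ in the refined decomposition, Einstein gives $R_{F}=0$, the identity $|R_{F}|^{2}=\frac{1}{4}\left(\frac{s^{*}-s}{8}\right)^{2}$ then forces $s^{*}=s$, hence $\nabla\omega=0$ and the metric is self-dual K\"ahler--Einstein with negative scalar curvature. Where you genuinely differ is the product case: the paper stays inside the curvature-block formalism (K\"ahler gives $s^{*}=s$, so the same identity yields $R_{F}=0$, while $R_{0}=0$ because the metric is K\"ahler; hence Ricci-flat, and being conformally flat the metric is flat, contradicting $K\neq 0$), whereas you evaluate the ordinary holomorphic sectional curvature (legitimate, since $\tilde{\nabla}=\nabla$ in the K\"ahler case) on the two de Rham factors, obtaining the values $K$ and $-K$, so pointwise constancy forces $K=0$. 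Your route is more elementary and makes the geometric obstruction transparent; its only tacit step is that the parallel $J$ preserves the two factors, which is automatic when $K\neq 0$ because $J$ commutes with the nontrivial holonomy of each factor and therefore cannot interchange them --- worth one sentence in a final write-up. Both arguments land on the same conclusion.
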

\begin{proof}
We use Theorem 10. 
Suppose $(M, g, \omega)$ is K\"ahler and $(M, g)$ is locally a product space of 2-dimensional riemannian manifolds of constant curvature
$K$ and $-K$ with $K\neq 0$. 
Then $(M, g, \omega)$ is conformally flat K\"ahler. 
Thus, $R_{F}=R_{0}=0$. Then $(M, g, \omega)$ is Ricci-flat, which implies $(M, g, \omega)$ is a locally flat K\"ahler space, 
which is a contradiction. 

Suppose $(M, g, \omega)$ is an almost-K\"ahler Einstein with negative scalar curvature. 
For an almost-K\"ahler Einstien manifold, we have $R_{F}=0$. 
From the following formula, 
\[|R_{F}|^{2}=\frac{1}{4}\left(\frac{s^{*}-s}{8}\right)^{2}.\]
we get $s^{*}\equiv s$, and therefore, $(M, g, \omega)$ is K\"ahler. 
Then $(M, g, \omega)$ is self-dual K\"ahler-Einstein with negative scalar curvature. 
\end{proof}

Let $(M, g, \omega)$ be a compact almost-K\"ahler four manifold with pointwise constant holomorphic sectional curvature $k$.
It is known that $(M, g, \omega)$ is self-dual and $\frac{s+3s^{*}}{24}=k$ [26].

\begin{Theorem}
(Sato [26]) Let $(M, g, \omega)$ be a compact almost-K\"ahler four manifold with pointwise constant holomorphic sectional curvature. 
Suppose $(M, g, \omega)$ has $J$-invariant Ricci tensor. 
Then $(M, g, \omega)$ is Einstein. 
\end{Theorem}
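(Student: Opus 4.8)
The plan is to combine the characterization of compact almost-Kähler four manifolds with pointwise constant holomorphic sectional curvature with Theorem 10. Since $(M,g,\omega)$ is such a manifold with $J$-invariant Ricci tensor, we are in the hypotheses of Theorem 10, and by self-duality (which is automatic here) the four conclusions of Theorem 10 are available. The strategy is to rule out, or rather collapse, all cases to the Einstein case: I would show that whenever the manifold is not already Einstein, the hypothesis $\frac{s+3s^{*}}{24}=k$ being \emph{constant} (pointwise), together with $k$ being globally constant in the conformally flat/product cases, forces the scalar curvature to vanish, and then that a scalar-flat Kähler metric arising here is in fact Ricci-flat hence Einstein.

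First I would dispose of the $\mathbb{CP}_2$ alternative of Theorem 10: the Fubini-Study metric is Kähler-Einstein, so it is Einstein, giving the conclusion. Second, for the locally flat Kähler case, flat implies Einstein trivially. Third, consider the case where $(M,g,\omega)$ is Kähler and $(M,g)$ is locally a product of two surfaces of constant curvature $K$ and $-K$: here I would use that pointwise constant holomorphic sectional curvature on a Kähler surface forces the holomorphic sectional curvature to be constant, which on a locally symmetric product $\Sigma_K\times\Sigma_{-K}$ can only happen if $K=0$, so the metric is flat, hence Einstein. Fourth, the remaining case is $(M,g)$ Einstein with negative scalar curvature, which is already the desired conclusion.

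The main obstacle I expect is the third case — showing that pointwise constant holomorphic sectional curvature is incompatible with a genuine (non-flat) locally symmetric product metric. The cleanest route is to note that for a Kähler surface, pointwise constant holomorphic sectional curvature forces the Bochner tensor (equivalently $W_-$ together with the trace-free Ricci part) to vanish in a way that makes $(M,g,\omega)$ conformally flat Kähler; but a conformally flat Kähler surface is scalar-flat, and on the product $\Sigma_K\times\Sigma_{-K}$ conformal flatness plus the product structure forces the two curvatures to satisfy $K=-K$ trivially while the holomorphic sectional curvature condition additionally forces $K=0$. Concretely, one computes the holomorphic sectional curvature in a local unitary frame adapted to the product splitting and finds it takes different values on the two factors unless $K=0$.

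\begin{proof}
By hypothesis $(M,g,\omega)$ is self-dual with $J$-invariant Ricci tensor, so Theorem 10 applies. In the first alternative $(M,g,\omega)$ is $\mathbb{CP}_2$ with a constant multiple of the Fubini-Study metric, which is Kähler-Einstein, hence Einstein. In the third alternative $(M,g,\omega)$ is locally flat Kähler, hence Einstein. In the fourth alternative $(M,g)$ is Einstein by assumption. It remains to treat the second alternative, in which $(M,g,\omega)$ is Kähler and $(M,g)$ is locally a product of two surfaces of constant curvature $K$ and $-K$. For a Kähler surface the holomorphic sectional curvature agrees with the Hermitian holomorphic sectional curvature, so $k=\frac{s+3s^{*}}{24}=\frac{s}{6}$ since $s^{*}=s$ in the Kähler case. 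On the locally symmetric product the scalar curvature is $2K+2(-K)=0$, hence $k=0$; but then the holomorphic sectional curvature vanishes identically, which for a locally symmetric Kähler surface forces the curvature tensor, hence $K$, to vanish. Therefore $(M,g)$ is flat, in particular Einstein. In all cases $(M,g)$ is Einstein.
\end{proof}
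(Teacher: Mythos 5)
Your proposal is correct and follows essentially the same route as the paper: reduce via Theorem 10, observe that the $\mathbb{CP}_2$, locally flat, and Einstein alternatives are already Einstein, and eliminate the product case using $s=2K-2K=0$ together with pointwise constant holomorphic sectional curvature. The only cosmetic difference is the last step in the product case: the paper notes that the factor tangent planes are holomorphic, so their curvatures $K$ and $-K$ must both equal $k=0$, while you invoke the standard Kähler polarization fact that identically vanishing holomorphic sectional curvature forces $R\equiv 0$; both are valid and reach the same conclusion.
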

\begin{proof}
Except the second case of Theorem 10, all of them are Einstein. 
Thus, suppose $(M, g, \omega)$ is conformally flat K\"ahler
with pointwise constant holomorphic sectional curvature. 
The the scalar curvature is zero. 
The tangent space of each factor of locally a product space is holomorphic. 
Therefore, the curvature of each factor is zero. which is a contradiction. 
\end{proof}

\begin{Theorem}
(Sato [26]) Let $(M, g, \omega)$ be a compact almost-K\"ahler four manifold with constant holomorphic sectional curvature.
Then $(M, g, \omega)$ is self-dual K\"ahler-Einstein. 
\end{Theorem}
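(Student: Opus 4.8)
The plan is to combine the known structure theory for constant holomorphic sectional curvature with the classification in Theorem 10. By Sato's result recalled above, $(M,g,\omega)$ is self-dual and $k=\frac{s+3s^{*}}{24}$ is a constant. Using the almost-K\"ahler identity $s^{*}=s+|\nabla\omega|^{2}$ obtained earlier, this becomes $24k=4s+3|\nabla\omega|^{2}$, so $k\ge\frac{s}{6}$ pointwise, with equality exactly when $(M,g,\omega)$ is K\"ahler.

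The main step is to deduce that the Ricci tensor is $J$-invariant. Pointwise constancy of $k$ already pins down the components $a$ and $b$ of the almost-Hermitian curvature operator, and I would try to exploit that $k$ is \emph{globally} constant: differentiating $24k=4s+3|\nabla\omega|^{2}$ and feeding the result into the second Bianchi identity together with $W_{-}=0$ should yield a divergence identity whose integral forces the $J$-anti-invariant part of the Ricci tensor to vanish. I expect the careful bookkeeping of the blocks $W_{+}^{F}$, $R_{F}$, $R_{0}$ in the curvature decomposition to be the technical heart of the argument, and this is the step I would worry about most.

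Granting $J$-invariance of the Ricci tensor, Theorem 10 applies, so $(M,g,\omega)$ is either $\mathbb{CP}_{2}$ with a rescaled Fubini--Study metric, or K\"ahler and locally a product of surfaces of constant curvatures $K$ and $-K$, or locally flat K\"ahler, or Einstein with negative scalar curvature. The product alternative is impossible just as in the proof of the preceding theorem of Sato: when $K\ne0$ the de Rham factors are $J$-invariant $2$-planes realizing holomorphic sectional curvatures $K$ and $-K$, contradicting constancy of $k$; hence $K=0$ and one lands in the locally flat case. The first and third alternatives are self-dual K\"ahler--Einstein, so only the Einstein case with $s<0$ remains. There $ric_{0}=0$, hence the full $\Lambda^{+}\to\Lambda^{-}$ block of the curvature operator vanishes and in particular $R_{F}=R_{0}=0$; combining this with the curvature constraints imposed by pointwise constant holomorphic sectional curvature and with $s<0$ should give $\nabla\omega=0$, so that $(M,g,\omega)$ is K\"ahler, necessarily a compact quotient of complex hyperbolic space, which is self-dual. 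In every case $(M,g,\omega)$ is self-dual K\"ahler--Einstein.
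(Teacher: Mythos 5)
There is a genuine gap, in two places. First, the step you yourself flag---deriving $J$-invariance of the Ricci tensor from global constancy of $k$ via a Bianchi/divergence identity---is never carried out; it is only a plan, and it is the hinge on which your whole reduction to Theorem 10 rests. The paper does not do this either: it simply quotes Theorem 14 to reduce to the Einstein case, so there is no argument in the paper you could borrow to fill this step, and as written your first move is unproved. Second, and more decisively, your treatment of the remaining Einstein case with $s<0$ does not work as stated. From $ric_{0}=0$ you correctly get $R_{F}=R_{0}=0$, but there is no identity of the form $|R_{F}|^{2}=\frac{1}{4}\left(\frac{s^{*}-s}{8}\right)^{2}$ available here: that identity belongs to the \emph{Hermitian} holomorphic sectional curvature setting of Theorem 13 (Lejmi--Upmeier), whereas the present theorem concerns the Levi-Civita holomorphic sectional curvature, for which Sato's constraints are only self-duality and $k=\frac{s+3s^{*}}{24}$. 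So ``the curvature constraints \dots should give $\nabla\omega=0$'' is precisely the missing content, not a routine verification.

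The paper closes the Einstein case by a different mechanism, and this is exactly where global (not merely pointwise) constancy of $k$ enters---something your proposal never uses after its first paragraph. Einstein implies $s$ is constant, so constancy of $k$ forces $s^{*}$, hence $s^{*}-s$, to be constant. If this constant is nonzero, Armstrong's theorem gives $5\chi+6\tau=0$; self-duality gives $\tau\geq0$, hence $\chi\leq0$, while the Gauss--Bonnet integrand for an Einstein metric gives $\chi\geq0$. Therefore $s=W_{+}=W_{-}=0$, and an anti-self-dual almost-K\"ahler four-manifold with $s=0$ is K\"ahler (since $s^{*}=\frac{s}{3}=s+|\nabla\omega|^{2}$), contradicting $s^{*}\neq s$; so $s^{*}\equiv s$ and the metric is self-dual K\"ahler--Einstein. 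Your elimination of the product case is fine and matches the paper's Theorem 14 argument, but without the Armstrong/characteristic-number step the negative Einstein alternative is not shown to be K\"ahler, and the proof is incomplete.
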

\begin{proof}
By Theorem 14, $(M, g, \omega)$ is Einstein. 
Since $\frac{s+3s^{*}}{24}=k$ is constant and $s$ is constant, $s^{*}$ is constant. 
Then $s^{*}-s$ is constant. 
Suppose $s^{*}\not\equiv s$. 
Then by Armstrong's theorem [5], we have $5\chi+6\tau=0$. 
Since $\tau\geq 0$, we get $\chi\leq 0$. 
On the other hand, we have 

\[\chi=\frac{1}{8\pi^{2}}\int_{M}\left(\frac{s^{2}}{24}+|W_{+}|^{2}+|W_{-}|^{2}-\frac{|ric_{0}|^{2}}{2}\right)d\mu\geq 0.\]
Thus, we get $s=W_{+}=0$. An anti-self-dual almost-K\"ahler four manifold with $s=0$ is K\"ahler. 
Thus, we get a contradiction. 
Thus, $s\equiv s^{*}$ and therefore, $(M, g, \omega)$ is a self-dual K\"ahler-Einstein surface.

\end{proof}

\vspace{20pt}
$\mathbf{Acknowledgments}$: The author would like to thank Prof. Claude LeBrun for helpful comments. 

\newpage

\end{document}